\providecommand{\U}[1]{\protect\rule{.1in}{.1in}}
\newtheorem{theorem}{Theorem}[section]
\newtheorem{corollary}[theorem]{Corollary}
\newtheorem{proposition}[theorem]{Proposition}
\newtheorem{lemma}[theorem]{Lemma}
\theoremstyle{definition}
\newtheorem{definition}[theorem]{Definition}
\begin{document}
\title{Estimating the index of summability of pairs of Banach spaces}
\author[Mariana Maia]{M. Maia}
\address{Departamento de Matem\'{a}tica\\
Universidade Federal da Para\'{\i}ba \\
58.051-900 - Jo\~{a}o Pessoa, Brazil.}
\email{mariana.britomaia@gmail.com }
\author[Joedson Santos]{J. Santos}
\address{Departamento de Matem\'{a}tica \\
Universidade Federal da Para\'{\i}ba \\
58.051-900 - Jo\~{a}o Pessoa, Brazil.}
\email{joedsonmat@gmail.com}
\thanks{2010 Mathematics Subject Classification: 47B10}
\thanks{Mariana Maia is supported by Capes and Joedson Santos is supported by CNPq}
\keywords{Multilinear operators, Banach spaces}
\maketitle

\begin{abstract}
Let $E_{1},...,E_{m},F$ be Banach spaces. The index of summability of $\left(
E_{1}\times\cdots\times E_{m},F\right)  $ is a kind of measure of how far the
$m$-linear operators $T:E_{1}\times\cdots\times E_{m}\rightarrow F$ are from
being multiple summing. In this paper we estimate the index of summability of
several pairs of Banach spaces$.$

\end{abstract}

%\tableofcontents

\section{Introduction and preliminary results}

For $1\leq q\leq p<\infty$ and Banach spaces $E_{1},...,E_{m},F$ over
$\mathbb{K}=\mathbb{R}$ or $\mathbb{C},$ let us denote $E_{j}^{\ast}$ the
topological dual of $E_{j}$ and $B_{E_{j}^{\ast}}$ the closed unit ball of
$E_{j}^{\ast}$. We recall that a continuous multilinear operator
$T:E_{1}\times\cdots\times E_{m}\rightarrow F$ is multiple $(p,q)$-summing
(see \cite{matos, perez}) if there is a constant $C\geq0$ such that
\begin{equation}
\left(  \sum_{k_{1},...,k_{m}=1}^{n}\left\Vert T\left(  x_{k_{1}}%
^{(1)},...,x_{k_{m}}^{(m)}\right)  \right\Vert ^{p}\right)  ^{\frac{1}{p}}\leq
C\prod_{i=1}^{m}\left(  \sup_{\varphi_{i}\in B_{E_{i}^{\ast}}}\sum
\limits_{k_{i}=1}^{n}\left\vert \varphi_{i}\left(  x_{k_{i}}^{(i)}\right)
\right\vert ^{q}\right)  ^{1/q}\label{t}%
\end{equation}
for all positive integers $n$ and all $x_{k}^{(i)}\in E_{i}$, with $1\leq
k\leq n$ and$\,1\leq i\leq m$. The vector space of all multiple $(p,q)$%
-summing operators is denoted by $\Pi_{(p,q)}^{mult}\left(  E_{1},\dots
,E_{m};F\right)  $. The infimum $\pi_{\left(  p,q\right)  }^{mult}(T)$, taken
over all possible constants $C$ satisfying (\ref{t}), defines a complete norm
in $\Pi_{\left(  p,q\right)  }^{mult}\left(  E_{1},\dots,E_{m};F\right)  $.
When $E_{1}=\cdots=E_{m}=E$, we write $\Pi_{(p,q)}^{mult}\left(
^{m}E;F\right)  $. For details on the theory of multilinear operators on
Banach spaces we refer to \cite{dineen, mujica} and for more details on
multiple summing operators we refer to \cite{Pop, Pop1} and references therein.

If an $m$-linear operator $T:E_{1}\times\cdots\times E_{m}\rightarrow F$ does
not satisfy (\ref{t}) it makes sense to try to have a measure of how far $T$
is from satisfying (\ref{t}); in this line, in \cite{MPS} the notion of index
of summability is considered.

\begin{definition}
\label{mariana} The \textit{multilinear} $m$-\textit{index of} $\left(
p,q\right)  $-\textit{summability} of a pair $\left(  E_{1}\times\cdots\times
E_{m},F\right)  $ is defined as
\[
\mathbb{\eta}_{(p,q)}^{m-mult}\left(  E_{1},...,E_{m};F\right)  =\inf
s_{m,p,q},
\]
where $s_{m,p,q}\geq0$ satisfies the following:

There is a constant $C\geq0$ (not depending on $n$) such that
\[
\left(  \sum_{k_{1},...,k_{m}=1}^{n}\left\Vert T\left(  x_{k_{1}}%
^{(1)},...,x_{k_{m}}^{(m)}\right)  \right\Vert ^{p}\right)  ^{\frac{1}{p}}\leq
Cn^{s_{m,p,q}}\prod_{i=1}^{m}\left(  \sup_{\varphi_{i}\in B_{E_{i}^{\ast}}%
}\sum\limits_{k_{i}=1}^{n}\left\vert \varphi_{i}\left(  x_{k_{i}}%
^{(i)}\right)  \right\vert ^{q}\right)  ^{1/q}%
\]
for every $T\in\mathcal{L}(E_{1},...,E_{m};F),$ all positive integers $n$ and
$x_{k_{i}}^{(i)}\in E_{i}$, with $1\leq k_{i}\leq n\text{ and }1\leq i\leq m$.

When $E_{1}=\cdots=E_{m}=E$, we write $\mathbb{\eta}_{(p,q)}^{m-mult}\left(
^{m} E;F\right)  $ instead of $\mathbb{\eta}_{(p,q)}^{m-mult}\left(
E,...,E;F\right)  $.
\end{definition}

In some sense this index measures how distant are the spaces $\Pi
_{(p,q)}^{mult}\left(  E_{1},\dots,E_{m};F\right)  $ and the space of
continuous $m$-linear operators from $E_{1}\times\cdots\times E_{m}$ to $F$,
denoted by $\mathcal{L}\left(  E_{1},\dots,E_{m};F\right)  $ . When both
spaces coincide we have
\[
\mathbb{\eta}_{(p,q)}^{m-mult}\left(  E_{1},...,E_{m};F\right)  =0.
\]
For polynomials the situation is similar. Let $\mathcal{P}(^{m}E;F)$ denote
the Banach space of all continuous $m$-homogeneous polynomials from $E$ into
$F$. We recall that given $1\leq p,q<\infty,$ with $p\geq\frac{q}{m},$ a
polynomial $P\in\mathcal{P}(^{m}E;F)$ is \textit{absolutely} $\left(
p,q\right)  $\textit{-summing} if there is a constant $C\geq0$ such that%
\begin{equation}
\left(  \sum_{k=1}^{n}\left\Vert P(x_{k})\right\Vert ^{p}\right)  ^{\frac
{1}{p}}\leq C\left(  \sup_{\varphi_{i}\in B_{E^{\ast}}}\sum\limits_{k=1}%
^{n}\left\vert \varphi_{i}\left(  x_{k}\right)  \right\vert ^{q}\right)
^{m/q}\label{8866}%
\end{equation}
for all positive integers $n$ and all $x_{k}\in E$, with $1\leq k\leq n$. We
denote by $\mathcal{P}_{(p,q)}\left(  ^{m}E;F\right)  $ the Banach space of
all absolutely $\left(  p,q\right)  $-summing polynomials from $E$ to $F$. We
refer the interested reader to \cite{AlbBayPelS, Bot, RuePer} for more recent
results and further details.

\begin{definition}
The \textit{polynomial} $m$-\textit{index of} $\left(  p,q\right)
$-\textit{summability} of a pair of Banach spaces $\left(  E,F\right)  $ is
defined as
\[
\mathbb{\eta}_{(p,q)}^{m-pol}\left(  E,F\right)  =\inf s_{m,p,q},
\]
where $s_{m,p,q}\geq0$ satisfies the following:

There is a constant $C\geq0$ (not depending on $n$) such that
\[
\left(  \sum_{j=1}^{n}\left\Vert P(x_{j})\right\Vert ^{p}\right)  ^{\frac
{1}{p}}\leq Cn^{s_{m,p,q}}\left(  \sup_{\varphi_{i}\in B_{E^{\ast}}}%
\sum\limits_{k=1}^{n}\left\vert \varphi_{i}\left(  x_{k}\right)  \right\vert
^{q}\right)  ^{m/q}%
\]
for every $P\in\mathcal{P}(^{m}E;F)$, all positive integers $n$ and all
$x_{j}\in E$, with $1\leq j\leq n.$
\end{definition}

For the sake of simplicity, given $x_{1},...,x_{n}\in E$ we shall, as usual,
denote
\[
\Vert(x_{k})_{k=1}^{n}\Vert_{w,p}:=\sup_{\varphi\in B_{E^{\ast}}}\left(
\sum_{k=1}^{n}\left\vert \varphi(x_{k})\right\vert ^{p}\right)  ^{\frac{1}{p}%
}.
\]

From now on, for $p \in\left[  1, + \infty\right]  $, $p^{\ast}$ denotes the
conjugate of $p$, i.e., $\frac{1} {p}+\frac{1}{p^{\ast}}=1$. For $1 \leq p <
\infty,$ let us set $X_{p} : = \ell_{p}$ and let us define $X_{\infty}=c_{0}.$

\bigskip{}Every result of the form
\[
\mathcal{L}\left(  E_{1},... ,E_{m};F\right)  =\Pi_{t,s}^{mult}\left(
E_{1},... ,E_{m};F\right)
\]
is called coincidence situation (see, for instance, \cite{BotBraJunPel,
BotMicPel}). It is not difficult to show how coincidence situations can be
used to estimate the index of summability.

\begin{proposition}
\label{estimativaporcoincidenciamult} Let $E_{1},... ,E_{m},F$ be Banach
spaces. Suppose that
\[
\mathcal{L}\left(  E_{1},... ,E_{m};F\right)  =\Pi_{t,s}^{mult}\left(
E_{1},... ,E_{m};F\right)  .
\]
Then

\begin{enumerate}
\item[(a)] For all $p,q$ satisfying $0<p\leq t$ and $0<s\leq q$, we have
\[
\eta_{(p,q)}^{m-mult}\left(  E_{1},... ,E_{m};F\right)  \leq\frac{m}{p}%
-\frac{m}{t}+\frac{m}{s}-\frac{m}{q}.
\]

\item[(b)] For all $p,q$ satisfying $0<p\leq t$ and $0<q\leq s$, we have
\[
\eta_{(p,q)}^{m-mult}\left(  E_{1},... ,E_{m};F\right)  \leq\frac{m}{p}%
-\frac{m}{t}.
\]

\item[(c)] For all $p,q$ satisfying $0<t\leq p$ and $0<s\leq q$, we have
\[
\eta_{(p,q)}^{m-mult}\left(  E_{1},... ,E_{m};F\right)  \leq\frac{m}{s}%
-\frac{m}{q}.
\]

\item[(d)] For all $p,q$ satisfying $0<t\leq p$ and $0<q\leq s$, we have
\[
\eta_{(p,q)}^{m-mult}\left(  E_{1},... ,E_{m};F\right)  =0.
\]

\end{enumerate}
\end{proposition}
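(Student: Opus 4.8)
The plan is to start from the coincidence hypothesis, which gives a constant $C\geq0$ with
\[
\left(\sum_{k_{1},...,k_{m}=1}^{n}\left\Vert T\left(x_{k_{1}}^{(1)},...,x_{k_{m}}^{(m)}\right)\right\Vert^{t}\right)^{1/t}\leq C\prod_{i=1}^{m}\Vert(x_{k_{i}}^{(i)})_{k_{i}=1}^{n}\Vert_{w,s}
\]
for all $T\in\mathcal{L}(E_{1},\dots,E_{m};F)$ and all finite families. All four items then follow by comparing $\ell_{p}$- and $\ell_{q}$-norms of finite sequences against $\ell_{t}$- and $\ell_{s}$-norms, inserting the elementary inequalities $\Vert(a_{j})_{j=1}^{N}\Vert_{p}\leq N^{1/p-1/t}\Vert(a_{j})_{j=1}^{N}\Vert_{t}$ when $p\leq t$ (with $N=n^{m}$ on the left), and $\Vert(x_{k})_{k=1}^{n}\Vert_{w,s}\leq n^{1/s-1/q}\Vert(x_{k})_{k=1}^{n}\Vert_{w,q}$ when $s\leq q$ — the latter being a pointwise application of the scalar inequality to each functional $\varphi\in B_{E_{i}^{\ast}}$ followed by taking the supremum.

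For item (a), where $p\leq t$ and $s\leq q$, I would apply the left-hand inequality with $N=n^{m}$ to pick up the factor $(n^{m})^{1/p-1/t}=n^{m/p-m/t}$, and on the right apply the $w$-norm inequality in each of the $m$ coordinates to pick up $n^{m/s-m/q}$; multiplying the two gives exponent $\frac{m}{p}-\frac{m}{t}+\frac{m}{s}-\frac{m}{q}$, which by Definition \ref{mariana} bounds $\eta_{(p,q)}^{m-mult}$ as claimed. Item (b), where $p\leq t$ but $q\leq s$, is the special case where no adjustment is needed on the right: since $q\leq s$ we have $\Vert(x_{k_{i}}^{(i)})\Vert_{w,s}\leq\Vert(x_{k_{i}}^{(i)})\Vert_{w,q}$ directly (monotonicity of $w$-norms in the index), so only the left-hand factor $n^{m/p-m/t}$ survives. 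Item (c), where $t\leq p$ and $s\leq q$, is the mirror image: the left-hand side satisfies $\Vert(\cdot)\Vert_{p}\leq\Vert(\cdot)\Vert_{t}$ with no power of $n$, while the right needs the factor $n^{m/s-m/q}$. Item (d), where $t\leq p$ and $q\leq s$, combines both monotonicities with no powers of $n$ at all, yielding the coincidence-type estimate with constant $C$ and exponent $0$; since the index is an infimum of nonnegative exponents and $0$ works, $\eta_{(p,q)}^{m-mult}=0$.

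The steps in order: first fix $T$ and a finite family and write the coincidence inequality; second, on the left-hand side, bound the $\ell_{p}(\{1,\dots,n\}^{m})$-norm of $\left(\Vert T(x_{k_{1}}^{(1)},\dots,x_{k_{m}}^{(m)})\Vert\right)$ by the $\ell_{t}$-norm, using monotonicity if $t\leq p$ and the factor $n^{m(1/p-1/t)}$ if $p\leq t$; third, on the right-hand side, bound each $\Vert(x_{k_{i}}^{(i)})\Vert_{w,s}$ by $\Vert(x_{k_{i}}^{(i)})\Vert_{w,q}$, using monotonicity if $q\leq s$ and the factor $n^{1/s-1/q}$ if $s\leq q$; fourth, collect the powers of $n$ and read off the exponent in each of the four regimes. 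No genuine obstacle is expected — the argument is purely a bookkeeping exercise with the two elementary inequalities $\Vert(a_{j})\Vert_{p}\leq N^{1/p-1/r}\Vert(a_{j})\Vert_{r}$ (for $p\le r$) and monotonicity of weak $\ell_{r}$-norms in $r$. The only point requiring a moment's care is that the length of the sequence on the left is $n^{m}$, not $n$, which is what produces the factor $m$ in each term of the exponents; I would state this explicitly so the reader sees where the $m/p-m/t$ comes from.
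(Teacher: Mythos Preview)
Your proposal is correct and follows essentially the same approach as the paper: use H\"older on the left to pass from the $\ell_{p}$-sum over $n^{m}$ indices to the $\ell_{t}$-sum (picking up $n^{m/p-m/t}$ when $p\leq t$, nothing when $p\geq t$), and on the right compare each weak-$s$ norm with the weak-$q$ norm (picking up $n^{1/s-1/q}$ per factor when $s\leq q$, nothing when $q\leq s$). Your explicit remark that the left-hand sum has length $n^{m}$, which is where the factor $m$ in the exponent comes from, is exactly the point the paper's computation makes implicit.
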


\begin{proof}
Let $T\in\mathcal{L}(E_{1},...,E_{m};F).$

(a) If $p\leq t,$ then
\begin{align*}
\left(  \sum_{k_{1},...,k_{m}=1}^{n}\Vert T(x_{k_{1}}^{(1)},...,x_{k_{m}%
}^{(m)})\Vert^{p}\right)  ^{\frac{1}{p}} &  \leq\left(  \sum_{k_{1}%
,...,k_{m}=1}^{n}\Vert T(x_{k_{1}}^{(1)},...,x_{k_{m}}^{(m)})\Vert^{t}\right)
^{\frac{1}{t}}\left(  \sum_{k_{1},...,k_{m}=1}^{n}|1|^{\frac{pt}{t-p}}\right)
^{\frac{1}{p}-\frac{1}{t}}\\
&  \leq C\prod_{i=1}^{m}\Vert(x_{k_{i}}^{(i)})_{k_{i}=1}^{n}\Vert_{w,s}\left(
n^{m}\right)  ^{\frac{1}{p}-\frac{1}{t}}\\
&  \leq Cn^{\frac{m}{p}-\frac{m}{t}}\prod_{i=1}^{m}\Vert(x_{k_{i}}%
^{(i)})_{k_{i}=1}^{n}\Vert_{w,s}.
\end{align*}

Since $s\leq q$, using the H\"{o}lder inequality for $i=1,... ,m$, we have
\begin{align*}
\Vert(x_{k_{i}}^{(i)})_{k_{i}=1}^{n}\Vert_{w,s} &  =\sup_{\varphi\in
B_{E_{i}^{\ast}}}\left(  \sum_{k_{i}=1}^{n}|\varphi(x_{k_{i}}^{(i)}%
)|^{s}\right)  ^{\frac{1}{s}}\\
&  \leq\sup_{\varphi\in B_{E_{i}^{\ast}}}\left[  \left(  \sum_{k_{i}=1}%
^{n}|\varphi(x_{k_{i}}^{(i)})|^{q}\right)  ^{\frac{1}{q}}\left(  \sum
_{k_{i}=1}^{n}|1|^{\frac{qs}{q-s}}\right)  ^{\frac{1}{s}-\frac{1}{q}}\right]
\\
&  \leq n^{\frac{1}{s}-\frac{1}{q}}\Vert(x_{k_{i}}^{(i)})_{k_{i}=1}^{n}%
\Vert_{w,q}.
\end{align*}
Thus
\[
\left(  \sum_{k_{1},...,k_{m}=1}^{n}\Vert T(x_{k_{1}}^{(1)},...,x_{k_{m}%
}^{(m)})\Vert^{p}\right)  ^{\frac{1}{p}}\leq Cn^{\frac{m}{p}-\frac{m}{t}%
+\frac{m}{s}-\frac{m}{q}}\prod_{i=1}^{m}\Vert(x_{k_{i}}^{(i)})_{k_{i}=1}%
^{n}\Vert_{w,q}.
\]

(b) If $p\leq t$ and $q\leq s,$ we just need to use the canonical inclusion
between the $\ell_{q}^{w}$ spaces to obtain%
\[
\left(  \sum_{k_{1},...,k_{m}=1}^{n}\Vert T(x_{k_{1}}^{(1)},...,x_{k_{m}%
}^{(m)})\Vert^{p}\right)  ^{\frac{1}{p}}\leq Cn^{\frac{m}{p}-\frac{m}{t}}%
\prod_{i=1}^{m}\Vert(x_{k_{i}}^{(i)})_{k_{i}=1}^{n}\Vert_{w,q}.
\]

The cases (c) and (d) are similar.
\end{proof}

In the context of polynomials the above result is translated as:

\begin{proposition}
\label{estimativaporcoincidenciapol} Let $E,F$ be Banach spaces and
\[
\mathcal{P}\left(  ^{m}E;F\right)  =\mathcal{P}_{t,s}\left(  ^{m}E;F\right)  .
\]
Then

\begin{enumerate}
\item[(a)] For all $p,q$ satisfying $0<p\leq t$ and $0<s\leq q$, we have
\[
\eta_{(p,q)}^{m-pol}\left(  E;F\right)  \leq\frac{1}{p}-\frac{1}{t}+\frac
{m}{s}-\frac{m}{q}.
\]

\item[(b)] For all $p,q$ satisfying $0<p\leq t$ and $0<q\leq s$, we have
\[
\eta_{(p,q)}^{m-pol}\left(  E;F\right)  \leq\frac{1}{p}-\frac{1}{t}.
\]

\item[(c)] For all $p,q$ satisfying $0<t\leq p$ and $0<s\leq q$, we have
\[
\eta_{(p,q)}^{m-pol}\left(  E;F\right)  \leq\frac{m}{s}-\frac{m}{q}.
\]

\item[(d)] For all $p,q$ satisfying $0<t\leq p$ and $0<q\leq s$, we have
\[
\eta_{(p,q)}^{m-pol}\left(  E;F\right)  =0.
\]

\end{enumerate}
\end{proposition}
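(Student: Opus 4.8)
The plan is to mirror the proof of Proposition~\ref{estimativaporcoincidenciamult}, replacing the $m$-linear reasoning by the $m$-homogeneous polynomial analogue. Fix $P\in\mathcal{P}(^{m}E;F)$; by hypothesis $P\in\mathcal{P}_{t,s}(^{m}E;F)$, so there is a constant $C\geq 0$ with
\[
\left(\sum_{k=1}^{n}\Vert P(x_{k})\Vert^{t}\right)^{1/t}\leq C\,\Vert(x_{k})_{k=1}^{n}\Vert_{w,s}^{m}
\]
for all $n$ and all $x_{1},\dots,x_{n}\in E$. The crucial arithmetic difference from the multilinear case is that now the outer sum on the left runs over a single index $k$ ranging in $\{1,\dots,n\}$, so that when we pass from the $\ell_{t}$-norm to the $\ell_{p}$-norm via H\"older's inequality we pick up a factor $(n)^{1/p-1/t}$ rather than $(n^{m})^{1/p-1/t}$; this is exactly why the exponent $m/p-m/t$ in the multilinear statement becomes $1/p-1/t$ here. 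On the right-hand side, the weak norm is raised to the $m$-th power, so the comparison of weak $\ell_{s}$ and weak $\ell_{q}$ norms contributes $m$ copies of the factor $n^{1/s-1/q}$, i.e.\ a factor $n^{m/s-m/q}$, which accounts for the term $m/s-m/q$.

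Concretely, for part (a) I would first apply H\"older's inequality with exponents $t/p$ and its conjugate to the finite sum $\sum_{k=1}^{n}\Vert P(x_{k})\Vert^{p}$ (writing $\Vert P(x_{k})\Vert^{p}=\Vert P(x_{k})\Vert^{p}\cdot 1$ and noting $p\leq t$), obtaining
\[
\left(\sum_{k=1}^{n}\Vert P(x_{k})\Vert^{p}\right)^{1/p}\leq \left(\sum_{k=1}^{n}\Vert P(x_{k})\Vert^{t}\right)^{1/t} n^{\frac1p-\frac1t}\leq C\,n^{\frac1p-\frac1t}\,\Vert(x_{k})_{k=1}^{n}\Vert_{w,s}^{m}.
\]
Then, exactly as in the proof of Proposition~\ref{estimativaporcoincidenciamult}, since $s\leq q$ the H\"older inequality inside the supremum defining the weak norm gives $\Vert(x_{k})_{k=1}^{n}\Vert_{w,s}\leq n^{\frac1s-\frac1q}\Vert(x_{k})_{k=1}^{n}\Vert_{w,q}$; raising this to the $m$-th power and combining yields
\[
\left(\sum_{k=1}^{n}\Vert P(x_{k})\Vert^{p}\right)^{1/p}\leq C\,n^{\frac1p-\frac1t+\frac{m}{s}-\frac{m}{q}}\,\Vert(x_{k})_{k=1}^{n}\Vert_{w,q}^{m},
\]
which is the desired estimate and shows $\eta_{(p,q)}^{m-pol}(E;F)\leq \frac1p-\frac1t+\frac{m}{s}-\frac{m}{q}$.

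For part (b), when $q\leq s$ one simply uses the canonical inclusion $\ell_{q}^{w}\hookrightarrow\ell_{s}^{w}$ (norm $\leq 1$), so $\Vert(x_{k})_{k=1}^{n}\Vert_{w,s}\leq\Vert(x_{k})_{k=1}^{n}\Vert_{w,q}$, and no extra power of $n$ appears from the right-hand side; combined with the $n^{1/p-1/t}$ factor from the first step this gives the bound $\frac1p-\frac1t$. Part (c) is the case $t\leq p$, where the first H\"older step is unnecessary because $\ell_{p}\hookrightarrow\ell_{t}$ already gives $(\sum\Vert P(x_{k})\Vert^{p})^{1/p}\leq(\sum\Vert P(x_{k})\Vert^{t})^{1/t}$, so only the $n^{m/s-m/q}$ factor from comparing weak norms survives; and part (d), with $t\leq p$ and $q\leq s$, combines these two ``free'' inclusions to get an honest coincidence inequality with no power of $n$, hence index $0$. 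I do not anticipate any genuine obstacle: the only point requiring care is bookkeeping the single versus $m$-fold index ranges correctly so that the left-hand factor comes out as $n^{1/p-1/t}$ and not $n^{m/p-m/t}$, and making sure the $m$-th power on the weak norm is propagated through the $s\leq q$ comparison to produce the $m/s-m/q$ term.
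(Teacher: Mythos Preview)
Your proposal is correct and follows exactly the approach the paper intends: the paper simply introduces this proposition with ``In the context of polynomials the above result is translated as:'' and gives no separate proof, so the expected argument is precisely the adaptation of the multilinear proof that you carry out. Your bookkeeping is right---the single index on the left yields $n^{1/p-1/t}$ while the $m$-th power on the weak norm yields $n^{m/s-m/q}$---and the four cases are handled just as in Proposition~\ref{estimativaporcoincidenciamult}.
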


\section{Some upper estimates for the index of summability}

The following lemma is quite useful in the theory of summing operators (see
\cite[Corollary 3.20]{DavVil} or \cite{dimant}).

\begin{lemma}
\label{truque} Let $p_{1},...,p_{m}\in\lbrack1,\infty]$. Then
\[
\mathcal{L}(E_{1},...,E_{m};F)=\Pi_{(t,p_{1}^{\ast},...,p_{m}^{\ast})}%
^{mult}(E_{1},...,E_{m};F)
\]
for all Banach spaces $E_{1},...,E_{m}$ if and only if there is a constant
$C>0$ such that
\[
\left(  \sum_{k_{1},...,k_{m}=1}^{n}\Vert T(e_{k_{1}},...,e_{k_{m}})\Vert
^{t}\right)  ^{\frac{1}{t}}\leq C\Vert T\Vert
\]
for every $m$-linear form $T:\ell_{p_{1}}^{n}\times\cdots\times\ell_{p_{m}%
}^{n}\rightarrow F.$
\end{lemma}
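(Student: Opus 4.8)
The statement is an "if and only if" asserting that a coincidence $\mathcal{L}=\Pi_{(t,p_1^*,\dots,p_m^*)}^{mult}$ for *all* Banach spaces $E_1,\dots,E_m$ is equivalent to a uniform bound (in $n$) for the specific $m$-linear forms on $\ell_{p_1}^n\times\cdots\times\ell_{p_m}^n$ evaluated on the canonical basis vectors. This is a standard "test on finite-dimensional $\ell_p$-spaces" reduction, so let me lay out both directions.

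For the forward direction ($\Rightarrow$), I would simply specialize the coincidence. Assume $\mathcal{L}(E_1,\dots,E_m;F)=\Pi_{(t,p_1^*,\dots,p_m^*)}^{mult}(E_1,\dots,E_m;F)$ for all Banach spaces; by the closed graph theorem (or the standard fact that the identity from one complete norm to a coarser one is bounded) there is a constant $C>0$ with $\pi_{(t,p_1^*,\dots,p_m^*)}^{mult}(T)\le C\|T\|$ for every $T$. Now take $E_i=\ell_{p_i}^n$ and feed the canonical basis vectors $(e_{k_i})_{k_i=1}^n$ into the summing inequality. The point is that for each $i$, $\|(e_{k_i})_{k_i=1}^n\|_{w,p_i^*}=\sup_{\varphi\in B_{(\ell_{p_i}^n)^*}}\big(\sum_{k_i}|\varphi(e_{k_i})|^{p_i^*}\big)^{1/p_i^*}=1$, since $(\ell_{p_i}^n)^*=\ell_{p_i^*}^n$ and the supremum over the unit ball of the $\ell_{p_i^*}$-norm of a vector's coordinates is exactly its $\ell_{p_i^*}$-norm $\le$ the $\ell_\infty$-like bound — more precisely the weak $\ell_{p_i^*}$-norm of the unit vector basis of $\ell_{p_i}^n$ equals $1$. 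Plugging this in collapses the right-hand side of the summing inequality to $C\|T\|$, giving exactly the displayed bound.

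For the converse ($\Leftarrow$), the plan is the usual density/approximation argument. Given the uniform bound on finite-dimensional forms, let $T\in\mathcal{L}(E_1,\dots,E_m;F)$ be arbitrary and fix finitely many vectors $x_{k_i}^{(i)}\in E_i$, $1\le k_i\le n$. For each $i$, consider the operator $u_i:\ell_{p_i}^n\to E_i$ sending $e_{k_i}\mapsto x_{k_i}^{(i)}$; a direct computation gives $\|u_i\|=\|(x_{k_i}^{(i)})_{k_i=1}^n\|_{w,p_i^*}$ (this is the standard identification of operators out of $\ell_{p_i}^n$ with weakly $p_i^*$-summable finite sequences). Then $T\circ(u_1,\dots,u_m):\ell_{p_1}^n\times\cdots\times\ell_{p_m}^n\to F$ is $m$-linear with norm at most $\|T\|\prod_i\|u_i\|$, and evaluating it on the canonical basis gives $T(x_{k_1}^{(1)},\dots,x_{k_m}^{(m)})$. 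Applying the hypothesis to this form yields
\[
\Big(\sum_{k_1,\dots,k_m=1}^n\|T(x_{k_1}^{(1)},\dots,x_{k_m}^{(m)})\|^t\Big)^{1/t}\le C\,\|T\circ(u_1,\dots,u_m)\|\le C\,\|T\|\prod_{i=1}^m\|(x_{k_i}^{(i)})_{k_i=1}^n\|_{w,p_i^*},
\]
which is precisely the $(t,p_1^*,\dots,p_m^*)$-summing inequality with constant $C\|T\|$; hence $T\in\Pi_{(t,p_1^*,\dots,p_m^*)}^{mult}$, and the reverse inclusion $\Pi\subseteq\mathcal{L}$ is automatic by continuity.

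The only genuinely delicate point is the norm identity $\|u_i\|=\|(x_{k_i}^{(i)})_{k_i=1}^n\|_{w,p_i^*}$ (and its special case that the canonical basis of $\ell_{p_i}^n$ has weak $p_i^*$-norm equal to $1$), which I expect to cite as a classical fact rather than reprove; one should be slightly careful with the endpoint conventions $p_i=1$ (so $p_i^*=\infty$, $\ell_{p_i^*}=c_0$-type behavior in finite dimensions reduces to $\ell_\infty^n$) and $p_i=\infty$ (so $X_\infty=c_0$ and $p_i^*=1$), but these are routine. Everything else is just composing multilinear maps with linear maps and tracking norms.
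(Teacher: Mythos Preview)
The paper does not actually prove this lemma; it merely cites \cite[Corollary 3.20]{DavVil} and \cite{dimant}. Your argument is the standard one underlying those references and is essentially correct, so in that sense you have supplied what the paper omits.

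One small point in the forward direction deserves care. You invoke the closed graph theorem to get a constant $C$ and then specialize to $E_i=\ell_{p_i}^n$; but closed graph only gives, for each fixed choice of spaces, a constant depending on that choice, so a priori your $C$ could vary with $n$. The clean fix is to apply the coincidence once to the infinite-dimensional spaces $E_i=X_{p_i}$ (that is, $\ell_{p_i}$, or $c_0$ when $p_i=\infty$), obtain a single constant $C$ there, and then observe that every $T:\ell_{p_1}^n\times\cdots\times\ell_{p_m}^n\to F$ extends with the same norm to $X_{p_1}\times\cdots\times X_{p_m}$ via the canonical projections. With that adjustment (and the endpoint bookkeeping you already flagged) the proof is complete.
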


We recall that for $2\leq q\leq\infty$, a Banach space $E$ has cotype $q$ if
there is a constant $C\geq0$ such that no matter how we select finitely many
vectors $x_{1},...,x_{n}$ from $E$,
\[
\left(  \sum_{k=1}^{n}\left\Vert x_{k}\right\Vert ^{q}\right)  ^{\frac{1}{q}%
}\leq C\left(  \int_{0}^{1}\left\Vert \sum_{k=1}^{n}r_{k}(t)x_{k}\right\Vert
^{2}dt\right)  ^{\frac{1}{2}},
\]
where $r_{k}$ denotes the $k$-th Rademacher function, that is, given
$k\in\mathbb{N}\text{ and }t\in\left[  0,1\right]  ,$ we have $r_{k}%
(t)=\mathrm{sign}\left[  \mathrm{sin}\left(  2^{k}\pi t\right)  \right]  .$
When $q=\infty,$ the left hand side will be replaced by the sup norm.
Henceforth we will denote $\inf\{q:E\mbox{ has cotype }q\}$ by $\cot(E)$.

We shall need the following coincidence theorem that can be essentially found
in \cite{AlbBayPelS, AraPel1} to obtain estimates of the indices of
summabilitity along this paper.

\begin{theorem}
\label{teocoin} Let $E_{1},...,E_{m},F$ be infinite-dimensional Banach spaces
and suppose that $F$ has finite cotype $\cot(F)=r.$

(a) If $s\in\left[  1,2\right)  $ and $m<\frac{s}{r(s-1)}$, then
\[
\mathcal{L}\left(  E_{1},...,E_{m};F\right)  =\Pi_{t,s}^{mult}\left(
E_{1},... ,E_{m};F\right)  \Leftrightarrow t\geq\frac{sr}{s-msr+mr}.
\]

(b) If $t\in\left[  \frac{2m}{m+1},2\right] $, then%
\[
\mathcal{L}\left(  E_{1},... ,E_{m};\mathbb{K}\right)  =\Pi_{t,s}%
^{mult}\left(  E_{1},... ,E_{m};\mathbb{K}\right)  \Leftrightarrow s\leq
\frac{2mt}{mt+2m-t}.
\]

(c) If $t\in\left(  2,\infty\right) $, then%
\[
\mathcal{L}\left(  E_{1},... ,E_{m};\mathbb{K}\right)  =\Pi_{t,s}%
^{mult}\left(  E_{1},... ,E_{m};\mathbb{K}\right)  \Leftrightarrow s\leq
\frac{mt}{mt+1-t}.
\]

\end{theorem}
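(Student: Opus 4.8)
The plan is to prove Theorem \ref{teocoin} by reducing each coincidence statement to the scalar-valued test criterion of Lemma \ref{truque}, then invoking known sharp results on norms of multilinear forms on $\ell_p^n$ spaces. For part (a), the forward direction $t\geq \frac{sr}{s-msr+mr}$ should follow by combining the Bennett--Carl type inclusion theorems (coming from the cotype $r$ hypothesis on $F$, which forces a Khinchin--Maurey factorization) with the multiple Grothendieck--Kwapie\'{n} inequalities; the value $\frac{sr}{s-msr+mr}$ is precisely the exponent one reads off after distributing the cotype inequality across the $m$ coordinates and optimizing. Concretely I would write $T$ applied to $(x^{(1)}_{k_1},\dots,x^{(m)}_{k_m})$, apply the cotype-$r$ inequality of $F$ to pass from the $\ell_t$-sum of norms to a Rademacher average, then successively apply the little Grothendieck inequality (or Khinchin) in each variable to convert the weak-$\ell_s$ data into the desired bound, tracking exponents. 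For the converse, one needs an example showing $t<\frac{sr}{s-msr+mr}$ fails: here I would take $F=\ell_r$ (or a space with $\cot(F)=r$ and exact cotype $r$) and use the standard identity-type $m$-linear operators built from the formal inclusion $\ell_1^n\hookrightarrow\ell_2^n\hookrightarrow\ell_r^n$, for which the left-hand and right-hand sides of \eqref{t} are explicitly computable as powers of $n$, yielding the sharpness.

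For parts (b) and (c), the target space is $\mathbb{K}$, so Lemma \ref{truque} applies directly: the coincidence $\mathcal{L}(E_1,\dots,E_m;\mathbb{K})=\Pi^{mult}_{t,s}(E_1,\dots,E_m;\mathbb{K})$ with $s=p^{\ast}$ (for an appropriate $p$) is equivalent to a uniform bound $\big(\sum_{k_1,\dots,k_m}|T(e_{k_1},\dots,e_{k_m})|^t\big)^{1/t}\leq C\|T\|$ over all $m$-linear forms $T$ on $\ell_p^n\times\cdots\times\ell_p^n$. The key inputs are the sharp estimates for the $\ell_t$-norm of the coefficient array of a bounded $m$-linear form on $\ell_p^n$: for the regime $t\in[\frac{2m}{m+1},2]$ one uses the Hardy--Littlewood / Praciano-Pereira inequalities with the Bohnenblust--Hille exponent $\frac{2m}{m+1}$ as the endpoint, and for $t\in(2,\infty)$ one uses the Hardy--Littlewood inequality in the other range, whose exponent relation is exactly $s\leq\frac{mt}{mt+1-t}$ after converting $s=p^{\ast}$. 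So the forward implications are just the quantitative Hardy--Littlewood--Bohnenblust--Hille inequalities, and I would cite \cite{AlbBayPelS} for the precise constants and ranges.

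The converse (necessity) direction in (b) and (c) is where the real work lies, and I expect it to be the main obstacle. One must exhibit, for every $s$ strictly larger than the claimed threshold, a sequence of $m$-linear forms on $\ell_p^n\times\cdots\times\ell_p^n$ (with $p=s^{\ast}$ now slightly too small) whose coefficient $\ell_t$-norms grow faster than any multiple of the operator norm. The canonical construction uses random $\pm1$ multilinear forms (Kahane--Salem--Zygmund type estimates): a random choice of signs $\varepsilon_{k_1\cdots k_m}$ gives an $m$-linear form on $\ell_\infty^n$ (hence on $\ell_p^n$) of norm $\lesssim n^{(m+1)/2}(\log n)^{1/2}$ with high probability, while the $\ell_t$-norm of the coefficients is $n^{m/t}$; choosing $t$ at the boundary and comparing the two powers of $n$ pins down the sharp $s$. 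I would make this precise by invoking the Kahane--Salem--Zygmund inequality in the form stated in \cite{AlbBayPelS} or \cite{AraPel1}, plug in the exponents, and check that the inequality $s\leq\frac{2mt}{mt+2m-t}$ (resp.\ $s\leq\frac{mt}{mt+1-t}$) is exactly the break-even condition. The delicate point is matching the $p$-dependence: when $p<\infty$ the relevant KSZ-type bound must be the one adapted to $\ell_p^n$ rather than $\ell_\infty^n$, and the exponent $\frac{2mt}{mt+2m-t}$ encodes precisely this interpolation between the $p=2$ and $p=\infty$ endpoints, so the bookkeeping is the crux.
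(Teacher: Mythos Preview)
Your overall strategy matches the paper's: translate the coincidence via Lemma~\ref{truque} into a uniform coefficient bound for operators on $\ell_{p}^{n}$-type domains, then invoke the known sharp Hardy--Littlewood/Bohnenblust--Hille type results. The paper is much terser---it simply cites \cite[Theorem~1.5]{AlbBayPelS} for (a) (combined with Lemma~\ref{truque} taking $p_{i}=s^{\ast}$) and \cite[Theorem~3.2]{AraPel1} for (b) and (c)---whereas your sketch essentially unpacks how those cited theorems are themselves proved.

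Two small caveats on your sketch. First, Lemma~\ref{truque} is not a ``scalar-valued'' criterion in general; for part (a) the target is the given $F$ with $\cot(F)=r$, and the input from \cite{AlbBayPelS} is a genuinely vector-valued coefficient estimate, so your reduction for (a) should go through the $F$-valued version rather than a scalar one. Second, for the necessity in (c) the Kahane--Salem--Zygmund random form does not by itself pin down the threshold $s=\frac{mt}{mt+1-t}$ when $t>2$: the KSZ exponent comparison yields only $\frac{1}{s}\geq\frac{m-1}{2m}+\frac{1}{t}$, which is the (b)-threshold, not the (c)-threshold. The correct extremizer in the range $t>2$ is the diagonal form $S(x^{(1)},\dots,x^{(m)})=\sum_{i=1}^{n}x^{(1)}_{i}\cdots x^{(m)}_{i}$, whose coefficient $\ell_{t}$-norm is $n^{1/t}$ and whose norm on $(\ell_{s^{\ast}}^{n})^{m}$ is at most $n^{1-m/s^{\ast}}$; comparing these gives exactly $\frac{1}{s}\geq\frac{m-1}{m}+\frac{1}{mt}$, the (c)-threshold. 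This is also the witness the paper uses later when proving optimality in the $t>2$ case.
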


\begin{proof}
In \cite[Theorem 1.5]{AlbBayPelS} it was proved that if $p_{1},...,p_{m}%
\in\left[  2,\infty\right]  ,$ and $F$ is infinite-dimensional with finite
cotype $\cot\left(  F\right)  :=r$, with $\frac{1}{p_{1}}+\cdots+\frac
{1}{p_{m}}<\frac{1}{r},$ then there is a constant $C_{p_{1},...,p_{m}}\geq1$
such that
\[
\left(  \sum_{k_{1},...,k_{m}=1}^{\infty}\Vert A(e_{k_{1}}^{(1)},...,e_{k_{m}%
}^{(m)})\Vert^{t}\right)  ^{\frac{1}{t}}\leq C_{p_{1},...,p_{m}}\left\Vert
A\right\Vert \Leftrightarrow\frac{1}{t}\leq\frac{1}{r}-\left(  \frac{1}{p_{1}%
}+...+\frac{1}{p_{m}}\right)
\]
for every continuous $m$-linear operator $A:\ X_{p_{1}}\times\cdots\times
X_{p_{m}}\rightarrow F$ (see also \cite{dimant}).

By Lemma \ref{truque}, with $p_{i}=s^{\ast},\text{ for all }i,$ this result is
translated to the language of multiple summing operators and we prove (a).

The proofs of (b) and (c) can be found in \cite[Theorem 3.2]{AraPel1}.
\end{proof}

An immediate corollary of Theorem \ref{teocoin}(a) and Proposition
\ref{estimativaporcoincidenciamult} is the following:

\begin{corollary}
\label{cornbd} Let $E_{1},...,E_{m},F$ be infinite-dimensional Banach spaces.
If $F$ has finite cotype $\cot(F)=r<\infty$, $1\leq s< 2$, $m<\frac{s}%
{r(s-1)}$ and $t=\frac{sr}{s-msr+mr}$, then

\begin{enumerate}
\item[(a)] For all $p,q$ satisfying $0<p\leq t$ and $\frac{mrt}{r-t+mrt}\leq
q$, we have
\[
\eta_{(p,q)}^{m-mult}\left(  E_{1},...,E_{m};F\right)  \leq\frac{m}{p}%
+m-\frac{1}{r}-\frac{m}{q}-\frac{(m-1)}{t}.
\]

\item[(b)] For all $p,q$ satisfying $0<p\leq t$ and $0<q\leq\frac
{mrt}{r-t+mrt}$, we have
\[
\eta_{(p,q)}^{m-mult}\left(  E_{1},...,E_{m};F\right)  \leq\frac{m}{p}%
-\frac{m}{t}.
\]

\item[(c)] For all $p,q$ satisfying $0<t\leq p$ and $\frac{mrt}{r-t+mrt}\leq
q$, we have
\[
\eta_{(p,q)}^{m-mult}\left(  E_{1},...,E_{m};F\right)  \leq m-\frac{1}%
{r}+\frac{1}{t}-\frac{m}{q}.
\]

\item[(d)] For all $p,q$ satisfying $0<t\leq p$ and $0<q\leq\frac
{mrt}{r-t+mrt}$, we have
\[
\eta_{(p,q)}^{m-mult}\left(  E_{1},...,E_{m};F\right)  =0.
\]

\end{enumerate}
\end{corollary}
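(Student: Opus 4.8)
The plan is to obtain the corollary by plugging the coincidence situation furnished by Theorem \ref{teocoin}(a) into the four estimates of Proposition \ref{estimativaporcoincidenciamult}, the only real work being to rewrite the resulting bounds in terms of $r$ and $t$ alone.

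First I would check that the hypotheses make the coincidence available. Since $1\leq s<2$ and $m<\frac{s}{r(s-1)}$, the quantity $s-msr+mr=s+mr(1-s)$ is strictly positive, so $t:=\frac{sr}{s-msr+mr}$ is a well-defined positive number satisfying $t\geq\frac{sr}{s-msr+mr}$; hence Theorem \ref{teocoin}(a) applies and gives
\[
\mathcal{L}\left(  E_{1},...,E_{m};F\right)  =\Pi_{t,s}^{mult}\left(  E_{1},...,E_{m};F\right)  .
\]

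Next I would record two elementary identities coming from the definition of $t$. Taking reciprocals,
\[
\frac{1}{t}=\frac{s-msr+mr}{sr}=\frac{1}{r}-m+\frac{m}{s},
\]
so that $\frac{m}{s}=m-\frac{1}{r}+\frac{1}{t}$, and solving the same relation for $s$ yields $s=\frac{mrt}{r-t+mrt}$. In particular the threshold $\frac{mrt}{r-t+mrt}$ appearing in the statement is precisely the value of $s$, so the conditions ``$\frac{mrt}{r-t+mrt}\leq q$'' and ``$0<q\leq\frac{mrt}{r-t+mrt}$'' are the conditions ``$s\leq q$'' and ``$0<q\leq s$'' of Proposition \ref{estimativaporcoincidenciamult}.

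Finally I would feed the above coincidence into Proposition \ref{estimativaporcoincidenciamult}. Its part (a) gives, for $0<p\leq t$ and $s\leq q$,
\[
\eta_{(p,q)}^{m-mult}\left(  E_{1},...,E_{m};F\right)  \leq\frac{m}{p}-\frac{m}{t}+\frac{m}{s}-\frac{m}{q};
\]
substituting $\frac{m}{s}=m-\frac{1}{r}+\frac{1}{t}$ rewrites the right-hand side as $\frac{m}{p}+m-\frac{1}{r}-\frac{m}{q}-\frac{m-1}{t}$, which is (a). Part (b) of the proposition is (b) verbatim after replacing $s$ by $\frac{mrt}{r-t+mrt}$; part (c) gives $\eta_{(p,q)}^{m-mult}\leq\frac{m}{s}-\frac{m}{q}$, and $\frac{m}{s}=m-\frac{1}{r}+\frac{1}{t}$ turns this into (c); and part (d) is (d) after the same substitution for $s$. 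No genuine obstacle arises here: the mathematical content is entirely contained in Theorem \ref{teocoin}(a) and Proposition \ref{estimativaporcoincidenciamult}, and the only point requiring a little care is checking that the stated hypotheses force $s-msr+mr>0$, so that $t$ is positive and Theorem \ref{teocoin}(a) does apply.
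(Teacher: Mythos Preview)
Your proposal is correct and follows exactly the approach the paper indicates: the paper simply states that the corollary is ``an immediate corollary of Theorem \ref{teocoin}(a) and Proposition \ref{estimativaporcoincidenciamult}'' without writing out any details, and you have supplied precisely those details, including the algebraic identification $s=\frac{mrt}{r-t+mrt}$ and $\frac{m}{s}=m-\frac{1}{r}+\frac{1}{t}$ needed to translate the bounds.
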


\bigskip

\section{Optimal estimates for the index of summability}

We begin this section by recalling a general version of the
Kahane--Salem--Zygmund inequality:

\begin{lemma}
\label{ksz}(See Albuquerque et al. \cite[Lemma 6.1]{nacib}) Let $m,n\geq1$,
let $p\in\left[  1,\infty\right]  ,$ and let
\[
\alpha\left(  p\right)  =\left\{
\begin{array}
[c]{ll}%
\frac{1}{2}-\frac{1}{p}, & \text{if }p\geq2\\
0, & \text{otherwise}.
\end{array}
\right.
\]
There is an universal constant $C_{m}$ (depending only on $m$) and there
exists an $m-$linear form $A:\ell_{p}^{n}\times\cdots\times\ell_{p}%
^{n}\rightarrow\mathbb{K}$ of the form
\[
A(z^{(1)},...,z^{(m)})=\displaystyle\sum_{i_{1},...,i_{m}=1}^{n}\pm z_{i_{1}%
}^{(1)}\cdots z_{i_{m}}^{(m)}%
\]
such that
\[
\Vert A\Vert\leq C_{m}n^{\frac{1}{2}+m\cdot\alpha(p)}.
\]

\end{lemma}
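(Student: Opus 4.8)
The plan is to use the probabilistic method: choose the signs $\pm$ in the definition of $A$ at random and show that the resulting random $m$-linear form has operator norm at most $C_{m}n^{1/2+m\alpha(p)}$ with probability strictly positive; any realization of the signs attaining this bound then furnishes the desired form. Concretely, let $\left(\varepsilon_{i_{1}\cdots i_{m}}\right)_{1\leq i_{1},\dots,i_{m}\leq n}$ be independent Rademacher variables on a probability space and put
\[
A_{\varepsilon}(z^{(1)},\dots,z^{(m)})=\sum_{i_{1},\dots,i_{m}=1}^{n}\varepsilon_{i_{1}\cdots i_{m}}\,z_{i_{1}}^{(1)}\cdots z_{i_{m}}^{(m)}.
\]
For a fixed tuple $z^{(1)},\dots,z^{(m)}$ in the closed unit ball of $\ell_{p}^{n}$, the scalar $A_{\varepsilon}(z^{(1)},\dots,z^{(m)})$ is a linear combination $\sum_{i_{1},\dots,i_{m}}\varepsilon_{i_{1}\cdots i_{m}}c_{i_{1}\cdots i_{m}}$ of the independent signs, with $c_{i_{1}\cdots i_{m}}=z_{i_{1}}^{(1)}\cdots z_{i_{m}}^{(m)}$, so its coefficient vector has Euclidean norm $\prod_{j=1}^{m}\Vert z^{(j)}\Vert_{2}$. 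Since $\Vert z\Vert_{2}\leq\Vert z\Vert_{p}$ for $p\leq2$ and $\Vert z\Vert_{2}\leq n^{1/2-1/p}\Vert z\Vert_{p}$ for $p\geq2$ (H\"{o}lder), this is $\leq n^{m\alpha(p)}$ in every case, so Hoeffding's inequality gives, for all $t>0$,
\[
\mathbb{P}\Bigl(\bigl|A_{\varepsilon}(z^{(1)},\dots,z^{(m)})\bigr|>t\Bigr)\leq 2\exp\!\Bigl(-\tfrac{t^{2}}{2\,n^{2m\alpha(p)}}\Bigr).
\]

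Next I would discretize the balls. Let $\mathcal{N}\subseteq B_{\ell_{p}^{n}}$ be a $\tfrac{1}{2}$-net of the unit ball; by the standard volumetric estimate one may take $|\mathcal{N}|\leq 5^{2n}$ (the exponent $2n$ also covering the complex scalar field). A short induction on $m$ --- based at $m=1$ on the elementary bound $\Vert L\Vert\leq 2\sup_{y\in\mathcal{N}}|L(y)|$ for bounded linear functionals $L$ --- yields $\Vert A\Vert\leq 2^{m}\max_{y^{(1)},\dots,y^{(m)}\in\mathcal{N}}\bigl|A(y^{(1)},\dots,y^{(m)})\bigr|$ for every $m$-linear form $A$ on $\ell_{p}^{n}$. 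Combining this with the tail bound above and a union bound over the at most $|\mathcal{N}|^{m}\leq 5^{2mn}$ tuples of net points gives
\[
\mathbb{P}\Bigl(\Vert A_{\varepsilon}\Vert>2^{m}t\Bigr)\leq 2\cdot5^{2mn}\exp\!\Bigl(-\tfrac{t^{2}}{2\,n^{2m\alpha(p)}}\Bigr).
\]

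Finally I would tune the parameter: taking $t=Kn^{1/2+m\alpha(p)}$ turns the exponential factor into $\exp(-K^{2}n/2)$, so the right-hand side becomes $2\exp\!\bigl(n(2m\log5-K^{2}/2)\bigr)$, and choosing $K=K(m)$ large enough that $K^{2}/2>2m\log5+1$ makes this at most $2e^{-n}<1$ for every $n\geq1$. Hence some realization of the signs satisfies $\Vert A\Vert\leq 2^{m}K\,n^{1/2+m\alpha(p)}$, and $C_{m}:=2^{m}K$ depends only on $m$, as required. The only delicate point is the bookkeeping with the combinatorial losses --- the net cardinality $|\mathcal{N}|^{m}$ and the factor $2^{m}$ from the discretization --- but both are harmless: the net cardinality is only exponential in $n$, hence it is absorbed into the sub-Gaussian constant $K=K(m)$ without affecting the $n^{1/2+m\alpha(p)}$ scaling. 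I expect the main, though routine, technical step to be verifying that this union bound still defeats the Hoeffding tail.
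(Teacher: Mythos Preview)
Your argument is correct and is precisely the classical Kahane--Salem--Zygmund probabilistic method: Hoeffding's sub-Gaussian tail for Rademacher sums, a volumetric $\varepsilon$-net on $B_{\ell_p^n}$, the multilinear net-approximation inequality $\Vert A\Vert\leq 2^{m}\max_{\mathcal{N}^m}|A|$, and a union bound. The bookkeeping is sound; in particular the coefficient vector does have $\ell_2$-norm $\prod_j\Vert z^{(j)}\Vert_2\leq n^{m\alpha(p)}$, and the exponential-in-$n$ net cardinality is indeed defeated by the Gaussian tail once $K=K(m)$ is chosen as you indicate. (In the complex case Hoeffding should be applied to the real and imaginary parts separately, which only affects the absolute constants.)

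There is nothing to compare with in the paper: Lemma~\ref{ksz} is stated there without proof, as a quotation of \cite[Lemma 6.1]{nacib}. Your write-up supplies exactly the argument behind that reference.
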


Now we can obtain optimal indices of summability for certain pairs of Banach spaces.

\begin{proposition}
Let $p,q$ be real numbers.

(a) If $\frac{2m}{m+1}\leq p\leq2\text{ and }\frac{2mp}{mp+2m-p}\leq q\leq2,$
then
\[
\eta_{(p,q)}^{m-mult}\left(  ^{m}\ell_{q^{\ast}};\mathbb{K}\right)  =\frac
{m}{p}+\frac{m}{2}-\frac{1}{2}-\frac{m}{q}.
\]

(b) If $2<p<\infty\text{ and }\frac{mp}{mp+1-p}\leq q$, then
\[
\eta_{(p,q)}^{m-mult}\left(  ^{m}\ell_{q^{\ast}};\mathbb{K}\right)
=m-1+\frac{1}{p}-\frac{m}{q}.
\]

(c) If $0< p < \infty\text{ and } 1 \leq q \leq2,$ then
\[
\eta_{(p,q)}^{m-mult}\left( ^{m} \ell_{q^{\ast}};c_{0}\right)  =\frac{m}{p}.
\]

\end{proposition}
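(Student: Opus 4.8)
The plan is to prove each equality by matching an upper and a lower estimate.

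\emph{Upper bounds.} For (a), since $\frac{2m}{m+1}\le p\le 2$ I may take $t=p$ in Theorem \ref{teocoin}(b), obtaining the coincidence $\mathcal{L}(^m\ell_{q^\ast};\mathbb{K})=\Pi_{p,s_0}^{mult}(^m\ell_{q^\ast};\mathbb{K})$ with $s_0=\frac{2mp}{mp+2m-p}$; feeding this into Proposition \ref{estimativaporcoincidenciamult}(c) with the pair $(p,q)$ (legitimate because $s_0\le q$) gives $\eta_{(p,q)}^{m-mult}(^m\ell_{q^\ast};\mathbb{K})\le\frac{m}{s_0}-\frac{m}{q}$, and a direct computation yields $\frac{m}{s_0}=\frac{m}{2}+\frac{m}{p}-\frac12$. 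For (b) the argument is identical, using Theorem \ref{teocoin}(c) (valid as $p>2$) to get the coincidence at $s_1=\frac{mp}{mp+1-p}$ and then Proposition \ref{estimativaporcoincidenciamult}(c), which gives $\eta_{(p,q)}^{m-mult}(^m\ell_{q^\ast};\mathbb{K})\le\frac{m}{s_1}-\frac{m}{q}=m-1+\frac1p-\frac{m}{q}$. For (c) the inequality $\eta_{(p,q)}^{m-mult}(^m\ell_{q^\ast};c_0)\le\frac{m}{p}$ is elementary and holds for arbitrary Banach spaces: from $\Vert T(x_{k_1}^{(1)},\dots,x_{k_m}^{(m)})\Vert\le\Vert T\Vert\prod_i\Vert x_{k_i}^{(i)}\Vert\le\Vert T\Vert\prod_i\Vert(x_k^{(i)})_{k=1}^n\Vert_{w,q}$ one sums the $n^m$ terms to get $\big(\sum_{k_1,\dots,k_m=1}^n\Vert T(\cdots)\Vert^p\big)^{1/p}\le\Vert T\Vert\, n^{m/p}\prod_i\Vert(x_k^{(i)})_{k=1}^n\Vert_{w,q}$.

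\emph{Lower bounds.} All three follow one template: I exhibit a single $m$-linear operator on $\ell_{q^\ast}$ (assembled from finite-dimensional examples by a block-sum over superexponentially growing blocks, with normalising scalars calibrated so the per-block estimates survive the limit) and use the canonical unit vectors $e_1,\dots,e_n$ as test vectors, recalling that $\Vert(e_k)_{k=1}^n\Vert_{w,q}=1$ in $\ell_{q^\ast}$; the index inequality then forces the desired exponent. For (c) I would use $T(z^{(1)},\dots,z^{(m)})=\big(z^{(1)}_{\sigma(\ell)_1}\cdots z^{(m)}_{\sigma(\ell)_m}\big)_{\ell\ge 1}\in c_0$, with $\sigma\colon\mathbb{N}\to\mathbb{N}^m$ a bijection; here $\Vert T\Vert=1$ (because $\Vert\cdot\Vert_\infty\le\Vert\cdot\Vert_{q^\ast}$) and $T(e_{k_1},\dots,e_{k_m})$ is a unit coordinate vector, so the left-hand side equals $n^{m/p}$ and the index is $\ge\frac{m}{p}$. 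For (a) I would apply the Kahane--Salem--Zygmund inequality (Lemma \ref{ksz}) on $\ell_{q^\ast}^n$: as $q\le 2$ we have $q^\ast\ge 2$, so $\alpha(q^\ast)=\frac1q-\frac12$ and the $\pm1$-coefficient form $A_n$ satisfies $\Vert A_n\Vert\le C_m n^{\frac12+m(\frac1q-\frac12)}$ while $A_n(e_{k_1},\dots,e_{k_m})=\pm 1$; the left-hand side is $n^{m/p}$, so the index is $\ge\frac{m}{p}-\big(\frac12+m(\frac1q-\frac12)\big)=\frac{m}{p}+\frac{m}{2}-\frac12-\frac{m}{q}$. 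For (b) I would instead take the diagonal form $D_n(z^{(1)},\dots,z^{(m)})=\sum_{i=1}^n z_i^{(1)}\cdots z_i^{(m)}$ on $\ell_{q^\ast}^n$, whose norm equals $n^{\max(0,\,1-m/q^\ast)}$ and for which $D_n(e_{k_1},\dots,e_{k_m})=1$ precisely when $k_1=\cdots=k_m$; the left-hand side is then $n^{1/p}$, and (in the range where $q^\ast\ge m$) the index is $\ge\frac1p-(1-\tfrac{m}{q^\ast})=m-1+\frac1p-\frac{m}{q}$.

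\emph{Expected difficulties.} The main technical work is the exact evaluation of $\Vert A_n\Vert$ and $\Vert D_n\Vert$ on $\ell_{q^\ast}^n$ via Hölder's inequality --- for $D_n$ one must split according to whether $q^\ast\ge m$ or $q^\ast<m$ --- and the verification that the block-sum of these examples is a genuine bounded $m$-linear map on $\ell_{q^\ast}$ with the scalars chosen so that the blockwise lower estimates pass to the limit; this bookkeeping, rather than any single inequality, is the crux of making the lower bounds rigorous. Matching the diagonal estimate in (b) to the coincidence bound is also what pins down the precise range of $q$ in which (b) is sharp.
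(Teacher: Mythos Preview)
Your proposal is correct and matches the paper's strategy closely for the upper bounds (the paper invokes item (a) of Proposition~\ref{estimativaporcoincidenciamult} with $t=p$, you invoke item (c); since $t=p$ the two coincide) and for the lower bound in (c). The substantive difference is in how you obtain the lower bounds for (a) and (b).

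The paper does \emph{not} build a single operator via block sums. It simply tests, for each $n$, against the $n$-dimensional form itself: in (a) the Kahane--Salem--Zygmund form $A_n:\ell_{q^\ast}^n\times\cdots\times\ell_{q^\ast}^n\to\mathbb{K}$ of Lemma~\ref{ksz}, in (b) the diagonal form $S_n(x^{(1)},\dots,x^{(m)})=\sum_{i=1}^n x_i^{(1)}\cdots x_i^{(m)}$, and in (c) the coordinate-product map into $c_0$. Each is viewed as an operator on $\ell_{q^\ast}$ via the first $n$ coordinates, and the index inequality is applied with the $\Vert T\Vert$ factor on the right (this is how the paper writes it in the proof, e.g.\ ``$\le Cn^{s}\Vert A\Vert$''). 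Because the constant $C$ in Definition~\ref{mariana} is uniform over $T$ (equivalently, by a closed-graph argument, one may insert $\Vert T\Vert$ on the right-hand side), letting $n$ vary over this \emph{family} of test operators immediately yields the exponent bound---no assembly into a single operator is required. Your block-sum construction with superexponential block sizes and calibrated scalars would work, but the bookkeeping you anticipate as ``the crux'' is entirely avoidable here; the paper's route is shorter and uses exactly the same test forms you identify.

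One further remark: your caveat that the diagonal form in (b) only yields the sharp lower bound when $q^\ast\ge m$ is accurate, and the paper's proof carries the same implicit restriction (the estimate $\Vert S\Vert\le n^{1-m/q^\ast}$ used there fails once $q^\ast<m$).
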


\begin{proof}
(a) Note that we can obtain an upper estimate using Theorem \ref{teocoin} (b)
and the first item of Proposition \ref{estimativaporcoincidenciamult}. In
fact, Theorem \ref{teocoin} tells us that
\[
\mathcal{L}\left(  E_{1},... ,E_{m};\mathbb{K}\right)  =\Pi_{t,\frac
{2mt}{mt+2m-t}}^{mult}\left(  E_{1},... ,E_{m};\mathbb{K}\right)
\]
and using the first item of Proposition \ref{estimativaporcoincidenciamult},
with $t=p$, we obtain
\[
\eta_{(p,q)}^{m-mult}\left(  E_{1},...,E_{m};\mathbb{K}\right)  \leq\frac
{m}{p}+\frac{m}{2}-\frac{1}{2}-\frac{m}{q}.
\]

Now let us show that this estimate is optimal for $E_{i}=\ell_{q^{\ast}}.$ By
Lemma \ref{ksz}, given $q^{\ast}\geq2$ (thus $q\leq2$) there is an operator
$A:\ell_{q^{\ast}}^{n}\times\cdots\times\ell_{q^{\ast}}^{n}\rightarrow
\mathbb{K}$ given by
\[
A(z^{(1)},...,z^{(m)})=\displaystyle\sum_{i_{1},...,i_{m}=1}^{n}\pm z_{i_{1}%
}^{(1)}\cdots z_{i_{m}}^{(m)}%
\]
such that
\[
\Vert A\Vert\leq C_{m}n^{\frac{1}{2}+m\left(  \frac{1}{2}-\frac{1}{q^{\ast}%
}\right)  }.
\]
Suppose that
\[
\left(  \sum_{k_{1},...,k_{m}=1}^{n}\Vert A(e_{k_{1}}^{(1)},...,e_{k_{m}%
}^{(m)})\Vert^{p}\right)  ^{\frac{1}{p}}\leq Cn^{s}\Vert A\Vert.
\]
Then
\[
n^{\frac{m}{p}}\leq Cn^{s}n^{\frac{1}{2}+m\left(  \frac{1}{2}-\frac{1}%
{q^{\ast}}\right)  }%
\]
and since $n$ is arbitrary,
\[
\frac{m}{p}-\frac{1}{2}-\frac{m}{2}+\frac{m}{q^{\ast}}\leq s.
\]
Thus
\[
\eta_{(p,q)}^{m-mult}\left(  ^{m}\ell_{q^{\ast}};\mathbb{K}\right)  \geq
\frac{m}{p}-\frac{1}{2}-\frac{m}{2}+\frac{m}{q^{\ast}}=\frac{m}{p}-\frac{1}%
{2}+\frac{m}{2}-\frac{m}{q}%
\]
and this proves (a).

(b) Using item (c) of Theorem \ref{teocoin} and the first item of Proposition
\ref{estimativaporcoincidenciamult}, we have
\[
\mathcal{L}\left(  E_{1},... ,E_{m};\mathbb{K}\right)  =\Pi_{t,\frac
{mt}{mt+1-t}}^{mult}\left(  E_{1},... ,E_{m};\mathbb{K}\right)
\]
and considering $t=p$ we obtain
\[
\eta_{(p,q)}^{m-mult}\left(  E_{1},...,E_{m};\mathbb{K}\right)  \leq
m+\frac{1}{p}-1-\frac{m}{q}.
\]
Let us show that the estimate is sharp for $E_{i}=\ell_{q^{\ast}}.$ Consider
$S:\ell_{q^{\ast}}^{n}\times\cdots\times\ell_{q^{\ast}}^{n}\rightarrow
\mathbb{K}$ given by $S(x^{(1)},...,x^{(m)})=\displaystyle\sum_{i=1}^{n}%
x_{i}^{(1)}\cdots x_{i}^{(m)}$ and note that $\Vert S\Vert\leq n^{1-\frac
{m}{q^{\ast}}}.$ If
\[
\left(  \sum_{k_{1},...,k_{m}=1}^{n}\Vert S(e_{k_{1}}^{(1)},...,e_{k_{m}%
}^{(m)})\Vert^{p}\right)  ^{\frac{1}{p}}\leq Cn^{s}\Vert S\Vert,
\]
then
\[
n^{\frac{1}{p}}\leq Cn^{s}n^{1-\frac{m}{q^{\ast}}}.
\]
Therefore
\[
\frac{1}{p}-1+\frac{m}{q^{\ast}}\leq s
\]
and
\[
\eta_{(p,q)}^{m-mult}\left(  ^{m}\ell_{q^{\ast}};\mathbb{K}\right)  \geq
\frac{1}{p}-1+\frac{m}{q^{\ast}}=\frac{1}{p}+m-1-\frac{m}{q}.
\]

(c) Let $t$ be a positive real number such that for each $T\in\mathcal{L}%
(^{m}\ell_{q^{\ast}};c_{0})$ there is a constant $C\geq0$ such that
\begin{equation}
\left(  \sum_{k_{1},...,k_{m}=1}^{n}\left\Vert T\left(  x_{k_{1}}%
^{(1)},...,x_{k_{m}}^{(m)}\right)  \right\Vert ^{p}\right)  ^{\frac{1}{p}}\leq
Cn^{t}\prod_{i=1}^{m}\left\Vert \left(  x_{k_{i}}^{(i)}\right)  _{k_{i}=1}%
^{n}\right\Vert _{w,q}\label{joma1}%
\end{equation}
for all positive integers $n$ and all $x_{k_{i}}^{(i)}\in\ell_{q^{\ast}}$,
with $1\leq k_{i}\leq n$.

Now, let $T\in\mathcal{L}(^{m}\ell_{q^{\ast}};c_{0})$ be defined by
\[
T\left(  x^{(1)},...,x^{(m)}\right)  =\left(  x_{j_{1}}^{(1)}\cdots x_{j_{m}%
}^{(m)}\right)  _{j_{1},...,j_{m}=1}^{n}.
\]
Of course $\left\Vert T\right\Vert =1$ and
\[
\left(  \sum_{j_{1},...,j_{m}=1}^{n}\left\Vert T(e_{j_{1}},...,e_{j_{m}%
})\right\Vert ^{p}\right)  ^{\frac{1}{p}}=n^{\frac{m}{p}}.
\]
We have $\left\Vert (e_{j_{i}})_{j_{i}=1}^{n}\right\Vert _{w,q}=1,$ the latter
condition together with (\ref{joma1}) imply
\[
n^{\frac{m}{p}}\leq Cn^{t}%
\]
and thus $t\geq\frac{m}{p}$. The reverse inequality follows by
\cite[Proposition 2.6 and 2.7]{MPS}.
\end{proof}

\bigskip

To prove our final results we recall two coincidence theorems.\ The first one
is a result credited to Defant and Voigt (see \cite{AleMat}), and asserts that
for any Banach space $E$ we have
\begin{equation}
\mathcal{P}\left(  ^{m}E;\mathbb{K}\right)  =\mathcal{P}_{1,1}\left(
^{m}E;\mathbb{K}\right)  .\label{ddddd}%
\end{equation}
The second one is a result due to Botelho \cite{Bot} asserting that for any
Banach space $E,F$, if $\cot(E)=mr$ or $cot(F)=r$, then
\begin{equation}
\mathcal{P}\left(  ^{m}E;F\right)  =\mathcal{P}_{r,1}\left(  ^{m}E;F\right)
.\label{bbbb}%
\end{equation}

We also need to recall the main results of \cite{MPS}:

\begin{theorem}
\label{mps} (See Maia, Pellegrino, Santos \cite{MPS}) Let $E,F$ be infinite
dimensional Banach spaces and $r:=\cot\left(  F\right)  .$ Then

\begin{enumerate}
\item[(a)] For $1 \leq q \leq2$ and $0< p \leq\frac{rq}{mr+q}$, we have
\[
\frac{m}{2} \leq\eta_{(p,q)}^{m-pol}\left(  E;F\right) .
\]

\item[(b)] For $1 \leq q \leq2$ and $\frac{rq}{mr+q} \leq p \leq\frac
{2r}{mr+2}$, we have
\[
\frac{mp+2}{2p}-\frac{mr+q}{rq} \leq\eta_{(p,q)}^{m-pol}\left(  E;F\right) .
\]

\item[(c)] For $2 \leq q < \infty\text{ and } 0 < p \leq\frac{2r}{mr+2},$ we
have
\[
\frac{m}{2}\leq\eta_{(p,q)}^{m-pol}\left(  E;F\right) .
\]

\item[(d)] For $2 \leq q < \infty\text{ and } \frac{2r}{mr+2} < p < r,$ we
have
\[
\frac{r-p}{pr}\leq\eta_{(p,q)}^{m-pol}\left(  E;F\right) .
\]

\end{enumerate}
\end{theorem}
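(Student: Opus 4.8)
The plan is to prove each of the four estimates of Theorem~\ref{mps}, all of which are lower bounds for $\eta_{(p,q)}^{m-pol}(E;F)$, by the standard device: for every $n$ one exhibits an $m$-homogeneous polynomial $P_{n}\in\mathcal{P}(^{m}E;F)$ and vectors $x_{1},\dots,x_{n}\in E$ for which the inequality defining the index must fail with every exponent below the claimed value. Quantitatively, if one can arrange
\[
\Big(\sum_{j=1}^{n}\Vert P_{n}(x_{j})\Vert^{p}\Big)^{1/p}\gtrsim n^{a},\qquad\Vert P_{n}\Vert\lesssim n^{b},\qquad\Vert(x_{j})_{j=1}^{n}\Vert_{w,q}\lesssim n^{c},
\]
with constants independent of $n$, then $\eta_{(p,q)}^{m-pol}(E;F)\ge a-b-mc$, and it remains to choose the data in each regime so that the right-hand side equals the stated bound.

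For the vectors I would always take $x_{1},\dots,x_{n}$ spanning a subspace of $E$ that is $(1+\varepsilon)$-isometric to $\ell_{2}^{n}$; such a subspace exists in every infinite-dimensional $E$ by Dvoretzky's theorem, and since restricting a functional to it only decreases the norm one gets $\Vert(x_{j})_{j=1}^{n}\Vert_{w,q}\asymp n^{\max\{1/q-1/2,\,0\}}$, i.e.\ $c=0$ when $q\ge 2$ and $c=\tfrac1q-\tfrac12$ when $q\le 2$; this is also the best one can do for an arbitrary domain, which is precisely why the cases $q\le 2$ and $q\ge 2$ split. For the polynomial there are two competing constructions, and the four items amount to taking whichever one is better. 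The first is a Kahane--Salem--Zygmund polynomial: feeding the $m$-linear form of Lemma~\ref{ksz} on $\ell_{2}^{n}$ (where $\alpha(2)=0$, so its norm is $\lesssim C_{m}n^{1/2}$) into the diagonal and tensoring with a fixed norm-one vector of $F$ gives $P_{n}$ with $a=\tfrac1p$ and $b=\tfrac12$, and inserting $c$ yields a bound which over the range of $p$ in items (a) and (c) is $\ge\tfrac m2$. The second construction uses the hypothesis $\cot(F)=r$: by the Maurey--Pisier theorem $\ell_{r}$ is finitely representable in $F$, so one may pick $y_{1},\dots,y_{n}\in F$ spanning a $(1+\varepsilon)$-copy of $\ell_{r}^{n}$ and take $P_{n}$ to be a witness showing that the Defant--Voigt coincidence \eqref{ddddd} and the Botelho coincidence \eqref{bbbb} are sharp, i.e.\ cannot be pushed below summing exponent $r$; here the $\ell_{r}$-versus-$\ell_{2}$ comparison on the range lowers $b$ and produces a bound of the shape $\tfrac1p-\tfrac1r$ (minus the loss $mc$ from the weak norm), which is exactly what appears in (b) and (d). The interpolated expression $\tfrac{mp+2}{2p}-\tfrac{mr+q}{rq}$ in (b) is the common value at which the two candidate bounds cross, and the breakpoints $p=\tfrac{rq}{mr+q}$ and $p=\tfrac{2r}{mr+2}$ are the values of $p$ at which that crossing occurs.

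The step I expect to be the genuine obstacle is the norm estimate $\Vert P_{n}\Vert\lesssim n^{b}$ in the range-dependent construction, \emph{uniformly over an arbitrary} $E$: one wants functionals $\phi_{k}\in E^{\ast}$ that are biorthogonal to the $x_{j}$ and that together span a \emph{good} copy of $\ell_{2}^{n}$ in $E^{\ast}$, so that the $\ell_{r}^{n}$-valued output $P_{n}(x)=\sum_{k}\phi_{k}(x)^{m}y_{k}$ stays bounded on $B_{E}$; a naive Hahn--Banach extension of the coordinate functionals of the $\ell_{2}^{n}$-subspace already inflates the polynomial norm by a factor $n^{m/2}$ and destroys the estimate. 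Balancing the gain of the $\ell_{r}$-versus-$\ell_{2}$ comparison against this, and then verifying that the two competing lower bounds meet exactly at the stated values of $p$, is where the computation lives; the rest is Hölder's inequality. These verifications, together with the matching upper bounds coming from \eqref{ddddd}, \eqref{bbbb} and Proposition~\ref{estimativaporcoincidenciapol} (which show the exponents are optimal in several cases), are carried out in \cite{MPS}.
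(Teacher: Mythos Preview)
The paper does not prove Theorem~\ref{mps}; it is quoted from \cite{MPS}. However, the proof of Theorem~\ref{cotipon} in this paper explicitly says it is ``using the argument from the proof of \cite{MPS}'', so we can see what that argument actually is, and it is \emph{not} the direct construction you sketch. The method is: assume the index inequality holds with exponent $t$; apply it to suitable polynomials $P_{n}$ (built from the identity of an $n$-dimensional subspace $X\subset E$ and the cotype structure of $F$) to deduce an inequality of the form
\[
\pi_{mp\left(\frac{r}{p}\right)^{\ast},\,1}^{(n)}(id_{X})\;\le\; n^{t/m}\,Q^{1/m};
\]
then invoke K\"onig's equivalence of $\pi^{(n)}$ and $\pi$ \cite{ko}, Maurey's equivalence of $(\cdot,1)$- and $(\cdot,2)$-summing norms \cite{mau}, and the K\"onig--Retherford--Tomczak-Jaegermann lower bound $\pi_{s,2}(id_{X})\gtrsim n^{1/s}$ \cite{Konig} to force $t\ge (r-p)/(pr)$. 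The whole argument lives in operator-ideal estimates for $id_{X}$, not in controlling the sup-norm of an explicit polynomial on all of $E$.

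Your plan has a genuine gap, and it is exactly the one you yourself flag. To make either of your constructions into a polynomial on $E$ (rather than on the $\ell_{2}^{n}$-subspace given by Dvoretzky) you must extend the coordinate functionals to $E^{\ast}$, and, as you note, the resulting $P_{n}$ no longer has norm $\lesssim n^{1/2}$ on $B_{E}$; you then say the balancing ``is where the computation lives'' and defer to \cite{MPS}, but \cite{MPS} does not do that computation---it sidesteps the extension problem entirely via the summing-norm route above. There is also a numerical slip: your KSZ bound gives $a-b-mc=\tfrac1p-\tfrac12-m\max\{\tfrac1q-\tfrac12,0\}$, and this is \emph{not} $\ge\tfrac m2$ over the full ranges in (a) and (c) once $r>2$ (e.g.\ in (c) one would need $p\le \tfrac{2}{m+1}$, strictly smaller than $\tfrac{2r}{mr+2}$). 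So the constructive approach, as stated, neither closes the extension issue nor reaches the claimed thresholds; the argument that actually works is the reduction to $\pi_{s,2}(id_{X})$.
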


Note that there is a range, $1 \leq q \leq2 \text{ and } \frac{2r}{mr+2} < p <
r,$ where the previous theorem doesn't provide any information, as illustrated below.

\begin{figure}[h]
\centering
%figura
\includegraphics[height=4cm]{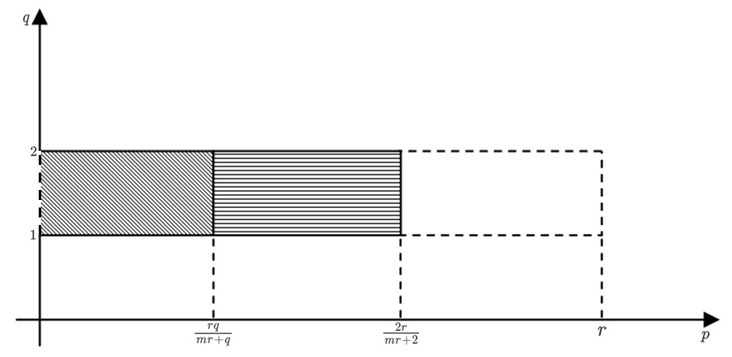}\caption{Region encompassed by (a)
and (b) of Theorem \ref{mps}}%
\end{figure}

\begin{figure}[h]
\centering
%figura
\includegraphics[height=5cm]{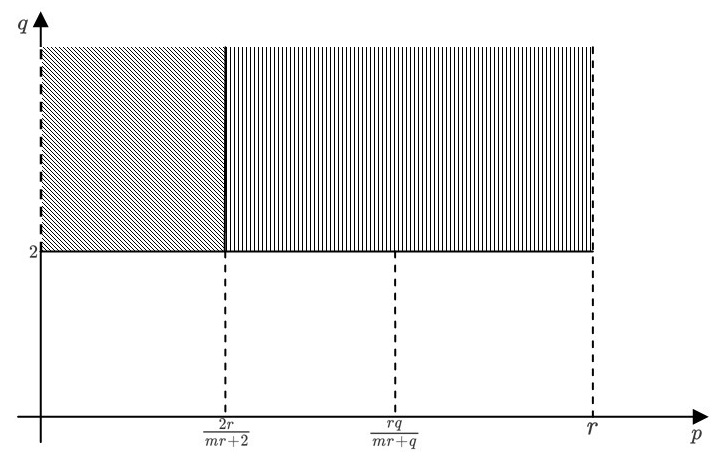}\caption{Region encompassed by (c)
and (d) of Theorem \ref{mps}}%
\end{figure}

\bigskip

The next theorem shows that it is possible to consider $q$ varying in a larger
interval than the interval considered in $(d)$.

\begin{theorem}
\label{cotipon} Let $E,F$ be infinite dimensional Banach spaces and
$r:=\cot\left(  F\right)  .$
\end{theorem}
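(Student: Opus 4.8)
The plan is to prove, for the wider range $1\le q\le 2$, the lower bound for $\eta_{(p,q)}^{m-pol}(E;F)$ that Theorem \ref{mps}(d) establishes only for $q\ge 2$; concretely, the expected statement is that for $1\le q\le 2$ and $\frac{2r}{mr+2}<p<r$,
\[
\frac{mp+2}{2p}-\frac{mr+q}{rq}\ \le\ \eta_{(p,q)}^{m-pol}(E;F),
\]
an expression which at $q=2$ reduces to the bound $\frac{r-p}{pr}$ of Theorem \ref{mps}(d) and which is the same one appearing in Theorem \ref{mps}(b) (there obtained only for $p\le\frac{2r}{mr+2}$). As with the other lower estimates in this circle of ideas, the proof is a single explicit construction; the only change with respect to (d) is that the weak $\ell_q$-norm of the test vectors is evaluated sharply when $q<2$.

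First I would fix $n$. By Dvoretzky's theorem there is a subspace $E_n\subseteq E$ which is $(1+\varepsilon)$-isomorphic to $\ell_2^n$; let $x_1,\dots,x_n\in E_n$ be the vectors corresponding to the unit vector basis of $\ell_2^n$, and by Hahn--Banach let $\varphi_1,\dots,\varphi_n\in E^{\ast}$ be extensions of the coordinate functionals, so that $\|\varphi_j\|\le 1+\varepsilon$ and $\varphi_j(x_k)=\delta_{jk}$. Since $\cot(F)=r$, the Maurey--Pisier theorem supplies normalized vectors $z_1,\dots,z_n\in F$ that are $(1+\varepsilon)$-equivalent to the unit vector basis of $\ell_r^n$. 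I would then test the definition of $\eta_{(p,q)}^{m-pol}(E;F)$ with the $m$-homogeneous polynomial
\[
P_n\colon E\longrightarrow F,\qquad P_n(x)=\sum_{j=1}^{n}\varphi_j(x)^{m}\,z_j,
\]
and with the vectors $x_1,\dots,x_n$.

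Three estimates are needed. First, $P_n(x_k)=z_k$, so $\bigl(\sum_{k=1}^n\|P_n(x_k)\|^p\bigr)^{1/p}$ is comparable to $n^{1/p}$. Second, since $|\varphi_j(x)|\le 1+\varepsilon$ whenever $\|x\|\le 1$ and $(z_j)$ is $\ell_r^n$-like, $\|P_n\|$ is comparable to $\sup_{\|x\|\le1}\bigl\|(\varphi_j(x)^{m})_{j=1}^{n}\bigr\|_{\ell_r^n}$, which is at most $(1+\varepsilon)^{m}n^{1/r}$. Third --- the new point --- extending and restricting norm-one functionals identifies $\|(x_k)_{k=1}^n\|_{w,q}$, computed in $E^{\ast}$, with the same quantity in $(\ell_2^n)^{\ast}=\ell_2^n$, which by H\"{o}lder's inequality equals $\sup_{\|a\|_2\le1}\|a\|_q=n^{1/q-1/2}$ for $1\le q\le 2$ (it is $1$ for $q\ge2$, the case used in (d)). Dividing the inequality that defines $\eta_{(p,q)}^{m-pol}$ by $\|P_n\|$ and inserting these estimates gives, for $1\le q\le 2$,
\[
n^{1/p-1/r}\ \lesssim\ n^{s_{m,p,q}}\,n^{m(1/q-1/2)},
\]
so letting $n\to\infty$ forces $s_{m,p,q}\ge\frac1p-\frac1r-m\bigl(\frac1q-\frac12\bigr)=\frac{mp+2}{2p}-\frac{mr+q}{rq}$, which is the claimed lower bound.

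The step I expect to be the real obstacle is the estimate $\|P_n\|\lesssim n^{1/r}$, which has to hold for an \emph{arbitrary} infinite-dimensional $E$. One cannot simply transplant a polynomial defined on the subspace $E_n$ to all of $E$: polynomials admit no Hahn--Banach extension, and the projection constant of an $n$-dimensional subspace of a general Banach space is only $O(\sqrt n)$, so a full multilinear Kahane--Salem--Zygmund form in the place of $P_n$ would inflate $\|P_n\|$ by a factor of order $n^{m/2}$. Building $P_n$ directly from the extended functionals $\varphi_j$ (each $m$-th power a scalar polynomial of norm $\le 1+\varepsilon$) together with the $\ell_r^n$-structure of $(z_j)$ is exactly what makes the bound on $\|P_n\|$ independent of the geometry of $E$; the remaining care is in verifying this, in tracking the $(1+\varepsilon)$-factors, and in checking that $\frac{2r}{mr+2}<p<r$ is precisely the range in which the estimate is informative. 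Finally, the upper bound obtained by applying Proposition \ref{estimativaporcoincidenciapol}(a) to Botelho's coincidence $\mathcal P(^mE;F)=\mathcal P_{r,1}(^mE;F)$ provides a companion to this estimate and narrows $\eta_{(p,q)}^{m-pol}(E;F)$ in the region left uncovered by Theorem \ref{mps}.
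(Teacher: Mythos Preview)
You have misidentified the statement being proved. The theorem (whose statement in the paper continues just after the \texttt{theorem} environment) asserts the \emph{$q$-independent} lower bound
\[
\frac{r-p}{pr}\ \le\ \eta_{(p,q)}^{m\text{-}pol}(E;F)
\qquad\text{for all }1\le q<\infty\text{ and }\tfrac{2r}{mr+2}<p<r,
\]
i.e.\ it extends Theorem~\ref{mps}(d) from $q\ge 2$ down to $q\ge 1$ \emph{without any loss}. Your proposed conclusion $\frac{mp+2}{2p}-\frac{mr+q}{rq}=\frac{1}{p}-\frac{1}{r}-m\bigl(\frac{1}{q}-\frac{1}{2}\bigr)$ coincides with the paper's bound only at $q=2$; for $q<2$ it is strictly smaller, and for $q$ close to $1$ it is negative and hence vacuous. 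So even if your argument is carried out correctly, it does not prove the theorem.

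The gap is structural, not a matter of sharpening estimates. In your test, the weak $q$-norm of the unit vectors of an $\ell_2^n$-copy really is of order $n^{1/q-1/2}$ when $q\le 2$, so the explicit-polynomial method necessarily incurs the loss $m(1/q-1/2)$. The paper avoids this by a different mechanism: starting from the defining inequality with exponent $t$, it first reduces (as in \cite{MPS}) to
\[
\frac{\bigl(\sum_{j}\|x_j\|^{mp(r/p)^{\ast}}\bigr)^{1/(mp(r/p)^{\ast})}}{\|(x_j)\|_{w,q}}\ \le\ n^{t/m}Q^{1/m}
\]
for every $n$-dimensional subspace $X\subset E$, then uses the trivial inequality $\|\cdot\|_{w,q}\le\|\cdot\|_{w,1}$ (valid for all $q\ge 1$) to pass to $\pi_{mp(r/p)^{\ast},1}^{(n)}(id_X)$. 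The decisive input is then operator-ideal theory: K\"onig's estimate removing the superscript $(n)$, Maurey's equivalence $\pi_{s,1}\asymp\pi_{s,2}$ for $s>2$, and the K\"onig--Retherford--Tomczak-Jaegermann lower bound $\pi_{s,2}(id_X)\gtrsim n^{1/s}$ with $s=mp(r/p)^{\ast}$. This yields $t\ge\frac{r-p}{pr}$ with no dependence on $q$. Your construction does not interface with this machinery, and that is precisely the missing idea.
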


For $1\leq q<\infty\text{ and }\frac{2r}{mr+2}<p<r,$ we have
\[
\frac{r-p}{pr}\leq\eta_{(p,q)}^{m-pol}\left(  E;F\right)  .
\]

\begin{proof}
Suppose that%
\[
\left(  \sum\limits_{j=1}^{n}\left\Vert P_{n}\left(  x_{j}\right)  \right\Vert
^{p}\right)  ^{\frac{1}{p}}\leq Dn^{t}\left\Vert P_{n}\right\Vert \left\Vert
\left(  x_{j}\right)  _{j=1}^{n}\right\Vert _{w,q}^{m},
\]
for all $x_{1},...,x_{n} \in E.$

Then, using the argument from the proof of \cite{MPS}, we have
\begin{equation}
\frac{\left(  \sum\limits_{j=1}^{n}\left\Vert id_{X}(x_{j})\right\Vert
^{mp\left(  \frac{r}{p}\right)  ^{\ast}}\right)  ^{\frac{1}{mp\left(  \frac
{r}{p}\right)  ^{\ast}}}}{\left\Vert (x_{j})_{j=1}^{n}\right\Vert _{w,q}}\leq
n^{\frac{t}{m}}Q^{\frac{1}{m}},\label{yyy}%
\end{equation}
for any $n$-dimensional subspace $X$ of $E$ and for all nonzero $x_{1}%
,...,x_{n} \in X.$

Since $q\geq1,$ we have
\[
\frac{\left(  \sum\limits_{j=1}^{n}\left\Vert id_{X}(x_{j})\right\Vert
^{mp\left(  \frac{r}{p}\right)  ^{\ast}}\right)  ^{\frac{1}{mp\left(  \frac
{r}{p}\right)  ^{\ast}}}}{\left\Vert (x_{j})_{j=1}^{n}\right\Vert _{w,1}}\leq
n^{\frac{t}{m}}Q^{\frac{1}{m}}.
\]
for all $x_{1},...,x_{n}\in X$. Then
\[
\pi_{mp\left(  \frac{r}{p}\right)  ^{\ast},1}^{(n)}(id_{X})\leq n^{\frac{t}%
{m}}Q^{\frac{1}{m}}.
\]
Since $\frac{2r}{mr+2}<p$ we have $mp\left(  \frac{r}{p}\right)  ^{\ast}>2$ ,
and from Theorem \cite[Corollary 2]{ko} there is a constant $c$, depending on
$mp\left(  \frac{r}{p}\right)  ^{\ast}$, such that
\begin{equation}
\frac{1}{c}\pi_{mp\left(  \frac{r}{p}\right)  ^{\ast},1}(id_{X})\leq
n^{\frac{t}{m}}Q^{\frac{1}{m}}.\label{jj}%
\end{equation}
By \cite[Proposition 3]{mau}, the $\left(  mp\left(  \frac{r}{p}\right)
^{\ast},1\right)  $- and $\left(  mp\left(  \frac{r}{p}\right)  ^{\ast
},2\right)  $-absolutely summing norms are equivalent, then there exists a
constant $k$ such that
\[
\pi_{mp\left(  \frac{r}{p}\right)  ^{\ast},2}\left(  id_{X}\right)  \leq
k\pi_{mp\left(  \frac{r}{p}\right)  ^{\ast},1}(id_{X}).
\]
Hence
\[
\frac{1}{ck}\pi_{mp\left(  \frac{r}{p}\right)  ^{\ast},2}\left(
id_{X}\right)  \leq n^{\frac{t}{m}}Q^{\frac{1}{m}}.
\]
By \cite[Corollary 2(a)]{Konig}, there is a constant $A>0$ such that
\[
A\cdot n^{\frac{1}{mp\left(  \frac{r}{p}\right)  ^{\ast}}}\leq\pi_{mp\left(
\frac{r}{p}\right)  ^{\ast},2}(id_{X}),
\]
and thus
\[
\frac{A}{ck}n^{\frac{r-p}{mpr}}\leq n^{\frac{t}{m}}Q^{\frac{1}{m}}.
\]
Finally, we obtain
\[
t\geq\frac{r-p}{pr},
\]
and
\[
\eta_{(p,q)}^{m-pol}\left(  E;F\right)  \geq\frac{r-p}{pr}.
\]

\end{proof}

\bigskip

We thus have the following corollary:

\begin{corollary}
Let $E,F$ be infinite dimensional Banach spaces and suppose that $F$ has
cotype $\cot(F):=r<\infty$. Then, for all $\frac{2r}{mr+2}<p<r,$
\[
\eta_{(p,1)}^{m-pol}\left(  E;F\right)  =\frac{1}{p}-\frac{1}{r}.
\]

\end{corollary}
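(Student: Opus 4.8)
The plan is to obtain the equality by matching a lower bound coming from Theorem \ref{cotipon} with an upper bound coming from the Botelho coincidence (\ref{bbbb}) fed into Proposition \ref{estimativaporcoincidenciapol}. Since both bounds turn out to be exactly $\frac{1}{p}-\frac{1}{r}=\frac{r-p}{pr}$, the corollary will follow immediately.

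For the lower bound, I would simply specialize Theorem \ref{cotipon} to $q=1$. The hypothesis $\frac{2r}{mr+2}<p<r$ is precisely the range in that theorem (and $q=1$ is admissible since the theorem allows $1\le q<\infty$), so it yields
\[
\eta_{(p,1)}^{m-pol}\left(E;F\right)\geq\frac{r-p}{pr}=\frac{1}{p}-\frac{1}{r}.
\]

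For the upper bound, I would invoke the result of Botelho recalled in (\ref{bbbb}): since $F$ has cotype $\cot(F)=r$, one has the coincidence
\[
\mathcal{P}\left(^{m}E;F\right)=\mathcal{P}_{r,1}\left(^{m}E;F\right).
\]
Now apply Proposition \ref{estimativaporcoincidenciapol} with $t=r$ and $s=1$. For the given $p$ we have $0<p<r=t$, and with $q=1$ we have $s=1\le q$, so item (a) of that proposition applies and gives
\[
\eta_{(p,1)}^{m-pol}\left(E;F\right)\leq\frac{1}{p}-\frac{1}{r}+\frac{m}{1}-\frac{m}{1}=\frac{1}{p}-\frac{1}{r}
\]
(item (b) of the same proposition, with $q=1\le s=1$, gives the identical bound, so either route works). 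Combining the two inequalities yields $\eta_{(p,1)}^{m-pol}(E;F)=\frac{1}{p}-\frac{1}{r}$.

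I do not expect any genuine obstacle here: the argument is a bookkeeping exercise of checking that the parameter constraints $\frac{2r}{mr+2}<p<r$ simultaneously place $p$ in the range required by Theorem \ref{cotipon} and satisfy $p<r=t$ needed for Proposition \ref{estimativaporcoincidenciapol}. The only point requiring a moment's care is making sure the exponent arithmetic in Proposition \ref{estimativaporcoincidenciapol}(a) collapses correctly when $s=q=1$, namely that $\frac{m}{s}-\frac{m}{q}=0$, so that the upper bound is exactly $\frac{1}{p}-\frac{1}{t}$ with $t=r$, matching the lower bound term by term.
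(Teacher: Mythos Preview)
Your proposal is correct and follows exactly the paper's approach: the lower bound is the specialization of Theorem \ref{cotipon} to $q=1$, and the upper bound comes from the Botelho coincidence (\ref{bbbb}) together with Proposition \ref{estimativaporcoincidenciapol}(a) with $t=r$, $s=q=1$. The paper's proof is terser (it only mentions the upper bound explicitly, the lower bound being immediate from the preceding theorem), but the content is identical.
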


\begin{proof}
To obtain the upper bound we use (a) of Proposition
\ref{estimativaporcoincidenciapol} and (\ref{bbbb}).
\end{proof}

\bigskip

A similar result for $F=\mathbb{R}\text{ and }m\text{ even was proved in the
same paper}$.

The proof of the next result is similar to the proof of the previous theorem,
and we have the following extension of \cite[Theorem 4.1]{MPS}:

\begin{theorem}
\label{realcommpar} Let $\ m\ $be an even positive integer and $E$ be an
infinite dimensional real Banach space. If $1\leq q<\infty\text{ and }\frac
{2}{m+2}<p<1,$ then
\[
\frac{1-p}{p}\leq\eta_{(p,q)}^{m\text{-}pol}\left(  E;\mathbb{R}\right)  .
\]

\end{theorem}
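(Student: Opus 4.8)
The plan is to mimic the proof of Theorem \ref{cotipon}, replacing the cotype-based, $F$-valued polynomial construction used there (which is unavailable when the codomain is $\mathbb{R}$) by one that exploits that $m$ is even and that $E$ is real. Fix $t\geq 0$ and a constant $D$ such that
\[
\left(\sum_{j=1}^{n}\left\Vert P_{n}(x_{j})\right\Vert ^{p}\right)^{\frac{1}{p}}\leq Dn^{t}\left\Vert P_{n}\right\Vert \left\Vert (x_{j})_{j=1}^{n}\right\Vert _{w,q}^{m}
\]
for every $P_{n}\in\mathcal{P}(^{m}E;\mathbb{R})$, every positive integer $n$ and all $x_{1},\dots,x_{n}\in E$; the goal is to prove $t\geq\frac{1-p}{p}$, which then gives $\frac{1-p}{p}\leq\eta_{(p,q)}^{m\text{-}pol}(E;\mathbb{R})$. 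Given an $n$-dimensional subspace $X\subseteq E$, nonzero vectors $x_{1},\dots,x_{n}\in X$ and norming functionals $\varphi_{j}\in B_{E^{\ast}}$ with $\varphi_{j}(x_{j})=\left\Vert x_{j}\right\Vert$, set $\sigma:=\frac{mp}{1-p}$ (chosen so that $p(\sigma+m)=\sigma$) and consider the $m$-homogeneous polynomial $P(x)=\sum_{j=1}^{n}\left\Vert x_{j}\right\Vert ^{\sigma}\varphi_{j}(x)^{m}$. Since $m$ is even and $E$ is real we have $\varphi_{j}(x)^{m}=|\varphi_{j}(x)|^{m}\geq 0$, so $P(x_{k})\geq\left\Vert x_{k}\right\Vert ^{\sigma+m}$ for each $k$, while a crude estimate on the unit ball of $E$ (using $|\varphi_{j}(x)|^{m}\leq|\varphi_{j}(x)|$ there together with the triangle inequality in $E^{\ast}$) gives $\left\Vert P\right\Vert \leq\sum_{j=1}^{n}\left\Vert x_{j}\right\Vert ^{\sigma}$.

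Feeding $P$ and $x_{1},\dots,x_{n}$ into the displayed inequality, the left-hand side is at least $\bigl(\sum_{k}\left\Vert x_{k}\right\Vert ^{p(\sigma+m)}\bigr)^{1/p}=\bigl(\sum_{k}\left\Vert x_{k}\right\Vert ^{\sigma}\bigr)^{1/p}$ and the right-hand side is at most $Dn^{t}\bigl(\sum_{j}\left\Vert x_{j}\right\Vert ^{\sigma}\bigr)\left\Vert (x_{k})_{k=1}^{n}\right\Vert _{w,q}^{m}$; dividing by $\sum_{j}\left\Vert x_{j}\right\Vert ^{\sigma}$ and then raising to the power $1/m$ (using $\frac{1-p}{mp}=\frac{1}{\sigma}$) this becomes
\[
\frac{\bigl(\sum_{k=1}^{n}\left\Vert id_{X}(x_{k})\right\Vert ^{\sigma}\bigr)^{1/\sigma}}{\left\Vert (x_{k})_{k=1}^{n}\right\Vert _{w,q}}\leq D^{1/m}n^{t/m},
\]
that is, $\pi_{\sigma,q}^{(n)}(id_{X})\leq D^{1/m}n^{t/m}$ for every $n$-dimensional subspace $X$ of $E$. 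This is the exact analogue of inequality (\ref{yyy}) in the proof of Theorem \ref{cotipon}, with the exponent $mp\left(\frac{r}{p}\right)^{\ast}$ replaced by $\sigma=\frac{mp}{1-p}$.

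From here the argument follows the end of the proof of Theorem \ref{cotipon} almost verbatim. Since $q\geq 1$ we have $\left\Vert (x_{k})_{k=1}^{n}\right\Vert _{w,q}\leq\left\Vert (x_{k})_{k=1}^{n}\right\Vert _{w,1}$, so $\pi_{\sigma,1}^{(n)}(id_{X})\leq\pi_{\sigma,q}^{(n)}(id_{X})\leq D^{1/m}n^{t/m}$; this monotonicity is precisely what makes the estimate uniform over $1\leq q<\infty$ and so yields the announced extension of \cite[Theorem 4.1]{MPS}. The hypothesis $\frac{2}{m+2}<p$ is exactly the condition $\sigma=\frac{mp}{1-p}>2$, hence \cite[Corollary 2]{ko} allows one to pass from $\pi_{\sigma,1}^{(n)}(id_{X})$ to the full summing norm $\pi_{\sigma,1}(id_{X})$, \cite[Proposition 3]{mau} gives a constant $k$ with $\pi_{\sigma,2}(id_{X})\leq k\,\pi_{\sigma,1}(id_{X})$, and \cite[Corollary 2(a)]{Konig} gives a constant $A>0$ with $A\,n^{1/\sigma}\leq\pi_{\sigma,2}(id_{X})$. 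Chaining these inequalities produces a constant $B>0$ with $B\,n^{1/\sigma}\leq D^{1/m}n^{t/m}$ for every $n$, which forces $\frac{1}{\sigma}\leq\frac{t}{m}$, i.e. $t\geq\frac{m}{\sigma}=\frac{1-p}{p}$, and therefore $\eta_{(p,q)}^{m\text{-}pol}(E;\mathbb{R})\geq\frac{1-p}{p}$. The only genuinely new ingredient relative to Theorem \ref{cotipon} is the polynomial $P$ above, and the point I expect to require care is checking that the crude bound $\left\Vert P\right\Vert \leq\sum_{j}\left\Vert x_{j}\right\Vert ^{\sigma}$ already produces the correct exponent $\sigma=\frac{mp}{1-p}$ (it does, as the computation shows, since for $q=1$ this lower bound is matched by the upper bound coming from (a) of Proposition \ref{estimativaporcoincidenciapol} applied to the Defant--Voigt coincidence (\ref{ddddd})), together with the observation that the hypotheses "$m$ even" and "$E$ real" are used only to guarantee $\varphi_{j}(x)^{m}=|\varphi_{j}(x)|^{m}\geq 0$; the rest is routine.
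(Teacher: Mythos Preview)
Your proof is correct and follows essentially the same route the paper indicates: the paper says the argument is ``similar to the proof of the previous theorem'' (Theorem~\ref{cotipon}), which in turn invokes the polynomial construction from \cite{MPS}; you have reproduced that construction explicitly---the polynomial $P(x)=\sum_{j}\Vert x_{j}\Vert^{\sigma}\varphi_{j}(x)^{m}$ with $\sigma=\frac{mp}{1-p}$, which needs $m$ even and $E$ real so that $\varphi_{j}(x)^{m}\geq 0$---and then carried out the same chain of summing-norm estimates via \cite{ko}, \cite{mau}, and \cite{Konig}. The passage from $\pi_{\sigma,q}^{(n)}$ to $\pi_{\sigma,1}^{(n)}$ using $q\geq 1$ is exactly the step that extends \cite[Theorem 4.1]{MPS} to the full range $1\leq q<\infty$, matching the paper's stated intent.
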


Combining the above theorem and (\ref{ddddd}) we get the following corollary:

\begin{corollary}
Let $E$ be a Banach space and $m$ be an even integer. Then, for $\frac{2}%
{m+2}<p<1$,
\[
\eta_{(p,1)}^{m-pol}\left(  E;\mathbb{R}\right)  =\frac{1}{p}-1.
\]

\end{corollary}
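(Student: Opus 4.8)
The plan is to combine the lower bound supplied by Theorem~\ref{realcommpar} with an upper bound coming from the Defant--Voigt coincidence~(\ref{ddddd}). Since $p<1$, the relevant range $\tfrac{2}{m+2}<p<1$ is precisely where Theorem~\ref{realcommpar} yields $\eta_{(p,q)}^{m\text{-}pol}(E;\mathbb{R})\geq \tfrac{1-p}{p}=\tfrac{1}{p}-1$ for every $1\leq q<\infty$, in particular for $q=1$. So the lower estimate $\eta_{(p,1)}^{m-pol}(E;\mathbb{R})\geq \tfrac{1}{p}-1$ is immediate, and the only real content is to produce a matching upper bound.

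For the upper bound I would invoke (\ref{ddddd}), i.e. $\mathcal{P}(^{m}E;\mathbb{K})=\mathcal{P}_{1,1}(^{m}E;\mathbb{K})$ for every Banach space $E$, and feed this coincidence situation $(t,s)=(1,1)$ into Proposition~\ref{estimativaporcoincidenciapol}(a). That item states that whenever $0<p\leq t$ and $0<s\leq q$ one has
\[
\eta_{(p,q)}^{m-pol}(E;F)\leq \frac{1}{p}-\frac{1}{t}+\frac{m}{s}-\frac{m}{q}.
\]
Specializing $t=s=1$, $q=1$, and noting that our hypothesis $\tfrac{2}{m+2}<p<1$ guarantees $p\leq 1=t$ and $1=s\leq 1=q$, the right-hand side collapses to $\tfrac{1}{p}-1+m-m=\tfrac{1}{p}-1$. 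Hence $\eta_{(p,1)}^{m-pol}(E;\mathbb{R})\leq \tfrac{1}{p}-1$, which together with the lower bound gives equality. (Here $\mathbb{K}=\mathbb{R}$, so the Defant--Voigt result applies verbatim with $F=\mathbb{R}$.)

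I do not anticipate a genuine obstacle: both ingredients are quoted results in the excerpt, and the arithmetic of plugging $t=s=q=1$ into Proposition~\ref{estimativaporcoincidenciapol}(a) is trivial. The only point requiring a line of care is checking the side conditions of that proposition, namely that $p\leq t=1$ (true since $p<1$) and $s=1\leq q=1$ (true), so that item (a) is actually the applicable case. One might also remark that the result is uniform in $E$ and in the choice of even $m$, and that the restriction to $m$ even is inherited solely from Theorem~\ref{realcommpar} (the upper bound holds for all $m$); this mirrors the preceding corollary, whose proof was likewise a one-line combination of a lower bound with Proposition~\ref{estimativaporcoincidenciapol}(a) applied to the coincidence~(\ref{bbbb}).
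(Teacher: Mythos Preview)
Your proof is correct and matches the paper's approach exactly: the paper simply states that the corollary follows by combining Theorem~\ref{realcommpar} with the Defant--Voigt coincidence~(\ref{ddddd}), the latter being fed through Proposition~\ref{estimativaporcoincidenciapol}(a) just as in the proof of the preceding corollary. Your verification of the side conditions $p\leq t=1$ and $s=1\leq q=1$ is precisely the routine check needed.
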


\bigskip

\end{document}